\newtheorem{thm}{Theorem}[section]
\newtheorem{lemma}[thm]{Lemma}
\newtheorem{prop}[thm]{Proposition}
\newtheorem{conj}[thm]{Conjecture}
\theoremstyle{definition}
\newtheorem{rem}[thm]{Remark}
\DeclareMathOperator{\Z}{\mathds{Z}}
\DeclareMathOperator{\Q}{\mathds{Q}}
\DeclareMathOperator{\R}{\mathds{R}}
\DeclareMathOperator{\area}{\mathrm{area}}
\DeclareMathOperator{\Area}{\mathrm{Area}}
\DeclareMathOperator{\maxarea}{\mathrm{Maxarea}}
\DeclareMathOperator{\lw}{\mathrm{lw}}
\DeclareMathOperator{\ehr}{\mathrm{ehr}}
\DeclareMathOperator{\length}{\mathrm{length}}
\DeclareMathOperator{\width}{\mathrm{width}}
\DeclareMathOperator{\conv}{\mathrm{conv}}
\newcommand{\floor}[1]{\left\lfloor #1 \right\rfloor}
\def\<{\langle}
\def\>{\rangle}
\begin{document}
	
	\title[Rational polygons]{Generalizations of Scott's inequality
    and Pick's formula to rational polygons}
	
	\author[Martin Bohnert]{Martin Bohnert}
	\address{Mathematisches Institut, Universit\"at T\"ubingen,
		Auf der Morgenstelle 10, 72076 T\"ubingen, Germany}
	\email{martin.bohnert@uni-tuebingen.de}
	
	\author[Justus Springer]{Justus Springer}
	\address{Mathematisches Institut, Universit\"at T\"ubingen,
		Auf der Morgenstelle 10, 72076 T\"ubingen, Germany}
	\email{justus.springer@uni-tuebingen.de}
	
\begin{abstract}
    We prove a sharp upper bound on the number of boundary lattice
    points of a rational polygon in terms of its denominator and the
    number of interior lattice points, generalizing Scott's
    inequality. We then give sharp lower and upper bounds on the area
    in terms of the denominator, the number of interior lattice
    points, and the number of boundary lattice points, which can be seen
    as a generalization of Pick's formula. Minimizers and maximizers
    are described in detail. As an application, we derive bounds for
    the coefficients of Ehrhart quasipolymials of half-integral
    polygons.
\end{abstract}

	\maketitle
	
	\thispagestyle{empty}
	
	\section{Introduction}

    By a \emph{rational polygon} \( P \subseteq \R^2 \), we mean the
    convex hull of finitely many points in \( \Q^2 \). We call the
    smallest positive integer \( k \) such that \( kP \) has integral
    vertices the \emph{denominator} of \( P \). Scott \cite{Sco76}
    showed that for rational polygons of denominator one (henceforth
    called \emph{lattice polygons}), the numbers \( i \) and \( b \)
    of interior and boundary lattice points of \( P \) satisfy \( b
    \leq 9 \) for \( i = 1 \) and \( b \leq 2i+6 \) for \( i \geq 2 \)
    and this bound is sharp. In particular, \( (i,b) = (1,9) \) holds
    if and only if \( P \) is affine unimodular equivalent to the
    threefold standard lattice triangle. We prove the following
    generalization of Scott's inequality to rational polygons:

\begin{thm}
	\label{thm:scott_generalization}

	Let \( P \) be a rational polygon of denominator \( k \geq 2 \) with
	\( i \geq 1 \) interior and \( b \) boundary lattice points. Then \( b \leq
	(k+1)(i+1) + 3 \) and equality holds if and only if \( P \) is affine
	unimodular equivalent to the triangle \(
	\conv\left((0,\frac{1}{k}),(0,-1),((k+1)(i+1),-1)\right) \).

\end{thm}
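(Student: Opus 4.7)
The plan is to reduce $P$ to a canonical horizontal configuration via an affine unimodular transformation and then bound the boundary lattice points slice by slice using the convexity of $P$.

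\textbf{Setup and reduction to a single interior row.} After applying an affine unimodular transformation (which preserves $k$, $i$, $b$), I may assume the lattice width of $P$ is realized in direction $(0,1)$, so $P\subset\R\times[y_{\min},y_{\max}]$ with $y_{\min},y_{\max}\in\tfrac{1}{k}\Z$. Write $\phi(y)=\min\{x:(x,y)\in P\}$ and $\psi(y)=\max\{x:(x,y)\in P\}$ on $[y_{\min},y_{\max}]$; then $\phi$ is convex, $\psi$ is concave, and the slice-width $y\mapsto\psi(y)-\phi(y)$ is concave. I first claim that if the interior lattice points of $P$ occupy two or more integer rows, then the convexity argument below, applied to a suitable pair of reference heights, yields a Scott-type bound of the form $b\leq 2i+6$, which is strictly smaller than $(k+1)(i+1)+3$ for $k\geq 2$. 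After an integer vertical translation, I may therefore assume all $i$ interior lattice points lie on the row $y=0$.

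\textbf{Main convexity estimate and counting.} The row $y=0$ contains exactly $i$ integers in the open interval $(\phi(0),\psi(0))$, so $\psi(0)-\phi(0)\leq i+1$, with equality only if $\phi(0),\psi(0)\in\Z$. Boundary lattice points of $P$ arise only from its integer rows: the row $y=0$ contributes at most the two endpoints $\phi(0),\psi(0)$, and the only other possibly contributing rows are the integers in $\{y_{\min},y_{\max}\}$. In the key configuration $y_{\min}=-1$, $y_{\max}=\beta\in\tfrac{1}{k}\Z_{>0}$, concavity of the slice-width on $[-1,\beta]$ yields
\[
	\psi(0)-\phi(0)\;\geq\;\tfrac{\beta}{\beta+1}\bigl(\psi(-1)-\phi(-1)\bigr)+\tfrac{1}{\beta+1}\bigl(\psi(\beta)-\phi(\beta)\bigr),
\]
and since $\psi(\beta)-\phi(\beta)\geq 0$ and $\beta\geq 1/k$, this rearranges to $\psi(-1)-\phi(-1)\leq(k+1)(i+1)$. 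Counting lattice points on $y=-1$ and $y=0$ then gives $b\leq(\psi(-1)-\phi(-1))+1+2\leq(k+1)(i+1)+3$.

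\textbf{Remaining configurations and equality.} Every other placement of $(y_{\min},y_{\max})$ gives a strictly smaller bound by the same estimate: if $\beta>1/k$ then $(\beta+1)/\beta<k+1$; if neither $y_{\min}$ nor $y_{\max}$ is integer, then only the two endpoints of $y=0$ can contribute to the boundary; and if both $y_{\min},y_{\max}$ are integers one recovers the Scott-type bound $b\leq 2i+6<(k+1)(i+1)+3$. Equality in the main bound therefore forces $\beta=1/k$, integrality of $\phi(0),\psi(0),\phi(-1),\psi(-1)$, equality in the concavity estimate (so $\phi,\psi$ are linear on $[-1,1/k]$), and $\phi(1/k)=\psi(1/k)$ (so the top of $P$ degenerates to a single vertex). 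A unimodular shear then moves the top vertex to $(0,\tfrac{1}{k})$, yielding the triangle $\conv((0,\tfrac{1}{k}),(0,-1),((k+1)(i+1),-1))$.

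\textbf{The main obstacle} is the exhaustive case analysis: ruling out every placement of $(y_{\min},y_{\max})$ other than $(-1,1/k)$ (up to reflection) with the strictness required to pin down uniqueness, and handling the multi-row interior case, where the convexity estimate must be applied to a well-chosen pair of rows to reproduce a Scott-type bound $b\leq 2i+O(1)$ uniformly in $k$.
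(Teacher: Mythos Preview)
Your approach differs from the paper's, which proceeds via the integer hull $Q=\conv(P\cap\Z^2)$: if $\dim Q<2$ then $b\leq 2$; if $i(Q)>0$ then Scott's inequality for lattice polygons gives $b\leq b(Q)\leq 2i(Q)+7\leq 2i+7\leq (k+1)(i+1)+3$, with the equality case ruled out by a short extra argument; and if $i(Q)=0$ then the classification of hollow lattice polygons puts $Q$ (hence the lattice points of $P$) into the strip $\R\times[-1,0]$, after which a convexity estimate like yours finishes the job. Your direct slice-width argument handles this last case cleanly and recovers the sharp constant $(k+1)(i+1)+3$ together with the equality analysis.

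The genuine gap is your ``multi-row'' reduction. You assert that if the interior lattice points occupy two or more rows then $b\leq 2i+6$, but your concavity estimate does not yield this. For instance, with interior points on rows $y=0,1$ carrying $i_0,i_1$ points, and $y_{\min}=-1$, $y_{\max}=2$, concavity only gives $w(-1)\leq 2w(0)-w(1)$ and $w(2)\leq 2w(1)-w(0)$; without lower bounds on $w(0),w(1)$ you get at best $b\leq 2i+O(1)$ with a constant that is not obviously $\leq 7$. What you actually need here is Scott's inequality itself (applied to $Q$), which is a nontrivial theorem and is precisely what the paper invokes. Your proposal as written would have to reprove Scott's inequality from scratch to close this case.

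A second, smaller gap is the case analysis on $(y_{\min},y_{\max})$. You treat $y_{\min}=-1$ but not $y_{\min}<-1$ (where row $y=-1$ lies in the interior and contributes its own constraints), nor the mixed cases in full; the paper sidesteps this entirely because the integer hull $Q$ is a \emph{lattice} polygon, so once $i(Q)=0$ the hollow classification forces $Q$ into $\R\times[-1,0]$ up to equivalence, eliminating these subcases at once. In short, routing through the integer hull buys exactly the two things you flag as obstacles.
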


Next, we provide area bounds. Recall that Pick's formula \cite{Pic99}
states that the area of a lattice polygon with \( i \) interior and
\( b \) boundary lattice points equals \( i+\frac{b}{2}-1 \). For
rational polygons of denominator at least two, fixing \( i \) and \(
b \) no longer uniquely determines the area. Instead, we provide
sharp lower and upper bounds, where we also completely
describe minimizers and maximizers.

\begin{thm}[See also figures~\ref{fig:area_minimizers_collinear_integer_hull}
    and~\ref{fig:area_minimizers}]
	\label{thm:area_lower_bound}

	Let \( P \) be a rational polygon of denominator \( k \geq 2 \) with
	\( i \geq 1 \) interior and \( b \) boundary lattice points. Set \(
	b_{\max} = (k+1)(i+1)+3 \). If \( Q := \conv(P \cap \Z^2) \) is
	two-dimensional and \( b \geq 2 \), we have
	\[
		\area(P)\ \geq\ \frac{i(k+1)+1}{2k} + \frac{b}{2} - 1.
	\]
	Here, equality holds if and only if \( P \) is equivalent to one of
	the following polygons:
	\[
		\begin{array}{lll}
			\mathrm{(0a)}                                     &
            \conv((0,-1),(i(k+1)-k+1,-1),(-\frac{1}{k},\frac{1}{k}))
			                                                & \qquad \text{for } b =
			b_{\max}-2(k+1)                                                                                            \\[5pt]
			\mathrm{(1a)} \qquad                              &
			\conv((0,-1),(b-2,-1),(i,0),(-\frac{1}{k},\frac{1}{k}))
			                                                & \qquad \text{for } 2 \leq b \leq
			b_{\max}-(k+1)                                                                                             \\[5pt]
			\mathrm{(1b)} \qquad                              &
			\conv((0,-2),(2,0),(-\frac{1}{k},\frac{1}{k})), &
			\qquad \text{for } (i,b) = (3,3)                                                                           \\[5pt]
			\mathrm{(2a)} \qquad                              &
			\conv((0,0),(0,-1),(b-3,-1),(i+1,0),(\frac{x}{k},\frac{1}{k}))
			                                                & \qquad \text{for } 3 \leq b \leq
			b_{\max}                                                                                                   \\[5pt]
			\mathrm{(2b)} \qquad                              &
			\conv((0,0),(0,-2),(2,0),(\frac{x'}{k},\frac{1}{k}))
			                                                & \qquad \text{for } (i,b) = (1,5),
		\end{array}
	\]
	where \( x = 0, \dots, \lfloor \frac{b_{\max}-b}{2} \rfloor \) and \(
	x' = 0, \dots, k \). Moreover, if \( \dim(Q) < 2 \), we have \( b
	\leq 2 \) and
	\[
		\area(P) \geq
		\begin{cases}
			\frac{i}{k} - \frac{1}{k} + \frac{3}{2k^2}, & b = 0  \\
			\frac{i}{k} + \frac{1}{2k^2},               & b = 1  \\
			\frac{i}{k} + \frac{1}{k},                  & b = 2. \\
		\end{cases}
	\]
	Here, equality holds if and only if \( P \) is equivalent to one of
	the following polygons:
	\[
		\begin{array}{lll}
			\mathrm{(0c)} & \conv((1,\frac{1}{k}),(1-\frac{1}{k},-\frac{1}{k}),(i+\frac{1}{k},0)),
			            & \text{ for } b = 0,                                                    \\[5pt]
			\mathrm{(1c)} & \conv((1,\frac{1}{k}),(1-\frac{1}{k},-\frac{1}{k}),(i+1,0)),
			            & \text{ for } b = 1,                                                    \\[5pt]
			\mathrm{(2c)} & \conv((0,0),(0,\frac{1}{k}),(\frac{x}{k},-\frac{1}{k}),(i+1,0)),
			            & \text{ for } b = 2,
		\end{array}
	\]
	where \( x = 0, \dots, k(i+1) \). All polygons listed above are
	pairwise non-equivalent.

\end{thm}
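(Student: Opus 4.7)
The plan is to pass to the integer hull $Q := \conv(P \cap \Z^2)$ and decompose $P$ into $Q$ plus ``caps'' in $P \setminus Q$, bounding cap areas via the denominator. When $\dim Q < 2$, the hypothesis forces $|P \cap \Z^2| \leq 2$, so $b \leq 2$, and I would handle this by a direct enumeration of thin polygons with prescribed interior count, producing the three families (0c), (1c), (2c). The main case is $\dim Q = 2$, which occupies the bulk of the argument.

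\textbf{Pick's reduction.} Since $\interior(Q) \subseteq \interior(P)$ and $Q$ contains every lattice point of $P$, all $b$ boundary lattice points of $P$ lie on $\partial Q$, while the remaining $i - i_Q$ interior lattice points of $P$ sit on $\partial Q \cap \interior(P)$. Hence $b_Q = b + i - i_Q$, and Pick's formula gives $\area(Q) = (i + b + i_Q)/2 - 1$. The target inequality then becomes
\[
    \area(P) - \area(Q) \;\geq\; \frac{i+1}{2k} - \frac{i_Q}{2}.
\]

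\textbf{Cap estimate.} I would partition $\partial Q$ into maximal chains alternating between those on $\partial P$ and those strictly inside $P$; both classes have the same cardinality $c \geq 1$ (because $b \geq 2$ and $P \neq Q$ as $k \geq 2$), and a boundary lattice-point count yields $L^{\text{in}} = i - i_Q + c$ for the total lattice length of inside-chains. For a single inside-chain with vertices $u_0, \ldots, u_m$ and edge lattice-lengths $L_1, \ldots, L_m$, the corresponding cap contains a non-lattice vertex $w$ of $P$, and since $kw \in \Z^2$, the lattice distance from $w$ to the supporting line of every chain-edge is at least $1/k$. The triangles $T_j := \conv(u_{j-1}, u_j, w)$ lie inside the cap, have pairwise disjoint interiors, and satisfy $\area(T_j) \geq L_j/(2k)$, so the cap area is at least $\ell/(2k)$, where $\ell$ is the chain's total lattice length. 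Summing over chains gives $\area(P) - \area(Q) \geq L^{\text{in}}/(2k) = (i - i_Q + 1)/(2k)$, which implies the required bound with slack $i_Q(k-1)/(2k) \geq 0$. The main obstacle I anticipate is verifying this cap-triangulation carefully: one must confirm that each $T_j$ lies in the cap and that consecutive $T_j$'s have disjoint interiors, which follow from the convexity of the chain as an arc of $\partial Q$ together with $w$ lying outside $Q$.

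\textbf{Equality.} Equality forces $i_Q = 0$, $c = 1$, the cap to consist of a single non-lattice vertex $w$ with the $T_j$'s exactly tiling it, and $w$ at lattice distance exactly $1/k$ from every chain line. I would combine these conditions with the classification of lattice polygons having no interior lattice points (thin trapezoids and small triangles) to read off the five families (0a), (1a), (1b), (2a), (2b) via a finite case analysis on the shape of $Q$ and the number of edges in the chain; the parameter $x$ in (2a) and (2b) will record the freedom of $w$ along a single-edge chain line. Pairwise non-equivalence of the listed minimizers is finally verified using standard affine-unimodular invariants (number of vertices, area, denominators and positions of non-lattice vertices).
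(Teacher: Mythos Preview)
Your plan is essentially the paper's proof: apply Pick to $Q$, bound $\area(P)-\area(Q)$ by triangles with unit base on the exposed part of $\partial Q$ and apex in $(P\setminus Q)\cap\tfrac1k\Z^2$ (yielding the extra $(i-i_Q+1)/(2k)$ and the slack $i_Q(k-1)/(2k)$), then classify minimizers via $i_Q=0$ together with the hollow--lattice--polygon list.

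Two points to tighten. First, your ``$L^{\mathrm{in}}/(2k)=(i-i_Q+1)/(2k)$'' should read ``$\geq$'', since you set $L^{\mathrm{in}}=i-i_Q+c$ with $c\geq 1$. Second, the justification you give for the cap triangles (``convexity of the chain together with $w$ outside $Q$'') is not sufficient when the chain bends around a vertex of $Q$: a vertex $w$ of $P$ in the cap need not lie on the outside of \emph{every} chain edge, and then some $T_j$ can overlap $Q$. The paper avoids this by picking the apex $C$ ``over the edge of $Q$ which contains $A$ and $B$'' for each base segment separately, i.e.\ one apex per edge rather than one per cap; you should do the same.

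For the collinear case $\dim Q<2$, the paper does not enumerate thin polygons but instead applies Pick to the lattice polygon $kP$, writing $\Area_k(P)=2\,i(kP)+b(kP)-2$ and bounding $i(kP)$ and $b(kP)$ from below using the known line segment of lattice points in $P$. This is short and systematic; adopt it in place of the ad hoc enumeration.
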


\begin{thm}[See also figure~\ref{fig:area_maximizers}]
	\label{thm:area_upper_bound}

	Let \( P \) be a rational polygon of denominator \( k \geq 4 \) with
	\( i \geq 1 \) interior and \( b \) boundary lattice points. Set \(
	b_{\max} := (k+1)(i+1)+3 \) and \( \tilde b := b_{\max}-b \). Then
	\[
		2k^2\area(P)\ \leq\ k(k+1)^2(i+1) -
		\begin{cases}
			\tilde b,              & b \in \{b_{\max}, b_{\max}-1\} \\
			(2 + k(\tilde b - 2)), & 1 \leq b \leq b_{\max}-2       \\
			(3 + k(\tilde b - 2)), & b = 0.
		\end{cases}
	\]
	Here, equality holds if and only if \( P \) is equivalent to one the
	following polygons: \setlength{\arraycolsep}{0pt}
	\[
		\begin{array}{lll}
			\mathrm{(0a)} \qquad                      & \conv\left((0,\frac{1}{k}),(\frac{1}{k},-1),((k+1)(i+1)-\frac{1}{k},-1)\right)
			                                          &
			\quad \text{for } b = b_{\max} - 4                                                                                                       \\[6pt]

			\mathrm{(0b)} \qquad \qquad               & \conv\left((0,\frac{1}{k}),(\frac{1}{k},-1),(1-\frac{1}{k},-1),
			(k(i+1)-\frac{1}{k},\frac{1}{k}-1)\right) & \quad \text{for } b =
			0                                                                                                                                        \\[6pt]
			\mathrm{(1a)} \qquad                      & \conv\left((0,\frac{1}{k}),(\frac{1}{k},-1),(b-\frac{1}{k},-1),(k(i+1),\frac{1}{k}-1)\right)
			                                          &
			\quad \text{for } 1 \leq b \leq b_{\max}-3,                                                                                              \\[6pt]
			\mathrm{(2a)} \qquad                      & \begin{array}{ll}
				                                            \conv\big( & (0,\frac{1}{k}),(0,\frac{1}{k}-1),
				                                            (x+\frac{1}{k},-1),                                             \\
				                                                       & (x+b-1-\frac{1}{k},-1),(k(i+1),\frac{1}{k}-1)\big)
			                                            \end{array}
			                                          &
			\quad \text{for } 2 \leq b \leq b_{\max}-2,
			\\[10pt]
			\mathrm{(2b)} \qquad
			                                          & \conv\left((0,\frac{1}{k}),(0,\frac{1}{k}-1),
			(\frac{1}{k},-1), ((k+1)(i+1),-1)\right)  &
			\quad \text{for } b =
            b_{\max} - 1, \\[6pt]
			\mathrm{(2c)}\qquad                       & \conv\left((0,\frac{1}{k}),(0,-1),((k+1)(i+1),-1)\right)
			                                          &
			\quad \text{for } b = b_{\max},                                                                                                          \\
		\end{array}
	\]
	where \( x = 0, \dots, \lfloor \frac{\tilde b}{2} \rfloor - 1 \). All
	polygons listed above are pairwise non-equivalent.

\end{thm}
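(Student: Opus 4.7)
The plan is to reduce to a normal form in which $P$ lies in a thin horizontal strip, and then carry out a combinatorial optimization over the possible shapes.

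First I would verify the bound on each candidate family $(0a)$--$(2c)$ by a direct shoelace computation. For instance, the triangle $(2c)$ gives $2k^2\area(P) = k(k+1)^2(i+1)$, matching the case $b = b_{\max}$; the quadrilateral $(2b)$ is obtained from $(2c)$ by chipping off a triangle of area $\tfrac{1}{2k^2}$, matching $\tilde b = 1$; and the pentagon $(2a)$ yields $2k^2\area(P) = k^2(k+1)(i+1) + kb - k - 2$, which rearranges to $k(k+1)^2(i+1) - (2 + k(\tilde b - 2))$, matching the middle case. The remaining three families are analogous. This confirms sharpness and fixes the target formula in each regime.

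The essential task is to prove that no other polygon exceeds these values. Here the key step is a \textbf{lattice width reduction}: using Theorem~\ref{thm:scott_generalization} together with a general inequality relating area, lattice width and number of interior lattice points for rational polygons, I would show that any polygon attaining the bound must have $\lw(P) = 1 + \tfrac{1}{k}$. The hypothesis $k \ge 4$ is expected to enter here, ruling out competitors of lattice width $\ge 2$ that become viable for small $k$. After an affine unimodular transformation one may then assume $P \subseteq \R \times [-1, \tfrac{1}{k}]$, with the unique row of interior lattice points on $y = 0$.

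Inside the strip, $P$ has (up to) six edges: a bottom edge along $y = -1$, a top edge along $y = \tfrac{1}{k}$, and up to four slanted edges joining them. Each of the four strip corners $(\cdot, -1), (\cdot, \tfrac{1}{k})$ is either met sharply by the left or right edge of $P$ or is ``chipped'' by a short slanted edge. Parametrising these chips by their lengths and positions, the quantities $b$, $i$ and $\area(P)$ become piecewise-linear functions of the chip parameters and the horizontal extent of the polygon. Maximising area for fixed $b$ and $i$ reduces to a small combinatorial optimisation whose solutions are precisely the six families listed, with the parameter $x$ in $(2a)$ (and the distinction between $(0a)$ and $(0b)$ at $b = 0$) reflecting a one-parameter horizontal sliding symmetry that preserves both lattice point counts and area. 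The piecewise structure of the bound mirrors the fact that chipping a single top corner costs only $\tfrac{1}{2k^2}$ of area per unit decrease in $b$, whereas removing an additional lattice point from the long bottom edge costs $\tfrac{1}{2k} + \tfrac{1}{2k^2}$.

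The main obstacle is the lattice width reduction in step two: a sharp bound of the form $\lw(P) \le 1 + \tfrac{1}{k}$ will presumably require distinguishing the regimes $\lw(P) \in (1, 1+\tfrac{1}{k}]$, $\lw(P) \in (1 + \tfrac{1}{k}, 2)$ and $\lw(P) \ge 2$, and using both Theorem~\ref{thm:scott_generalization} and the hypothesis $k \ge 4$ to show that only the first regime can attain the target area. Uniqueness of the equality cases inside the strip then follows from the convexity of the final optimisation.
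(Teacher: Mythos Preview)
Your high-level architecture matches the paper's: reduce to the strip $\R\times[-1,\tfrac1k]$, then carry out a finite optimisation inside it. Two points deserve comment.

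\textbf{The strip reduction.} You flag this as the main obstacle and propose to attack it via Theorem~\ref{thm:scott_generalization} plus ``a general inequality relating area, lattice width and interior lattice points''. The paper does \emph{not} use Theorem~\ref{thm:scott_generalization} here at all; that result bounds $b$, not area, and does not help exclude wide polygons. Instead the paper invokes two external area estimates: first, a bound of the form $\Area_k(P)\le\max(k^2(4i+5),\tfrac12 k(k+2)^2(i+1))$ for polygons not realisable in $\R\times[-1,1]$ (from \cite{BS24}), and second, $\Area_k(P)\le k(k^2/h+2k+h)(i+1)$ for polygons sitting in $\R\times[-1,h/k]$ with $h\ge 2$ minimal (from \cite{Boh23}). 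For $k\ge4$ both of these are checked to be strictly below $\maxarea(k,i,0)$, which is the smallest value of the target. So the reduction is a two-step comparison against known area--width--$i$ bounds, and the hypothesis $k\ge4$ enters exactly in those numerical comparisons. Your instinct that such an inequality is needed is right, but you should locate and cite the concrete bounds rather than hope Theorem~\ref{thm:scott_generalization} will do the work.

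\textbf{The in-strip optimisation.} Your ``chip the four corners'' parametrisation could be made to work, but the paper's route is tighter and avoids a proliferation of cases. After shearing so that the leftmost vertex at height $\tfrac1k$ is $(0,\tfrac1k)$, the paper slices $P$ at height $-1+\tfrac1k$ into a trapezoid $P_{[-1,-1+\tfrac1k]}$ and a piece $P_{[-1+\tfrac1k,\tfrac1k]}$, bounds the two slice lengths $\length(P_{-1})$ and $\length(P_{-1+1/k})$ directly in terms of $b_{-1}$ and $b_0$, and observes that the upper piece is maximised precisely when the top degenerates to the single vertex $(0,\tfrac1k)$. The case split is then simply over $b_0\in\{0,1,2\}$, and the families $(0a)$--$(2c)$ drop out of when the two length bounds can be simultaneously saturated. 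This is cleaner than tracking four independent chip parameters, and it makes the uniqueness of the maximisers (and the range of the sliding parameter $x$) transparent.
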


After we prove
Theorems~\ref{thm:scott_generalization}-\ref{thm:area_upper_bound} in
section~\ref{sec:proofs_theorems_1.1_1.2_and_1.3}, we turn to the case
\( k = 2 \) in section~\ref{sec:half_integral_polygons}. We found that
in this case, the area bound from Theorem~\ref{thm:area_upper_bound}
can be violated precisely if \( i = 1 \) or \( b = 0 \). We obtain the
following sharp upper bound:

\begin{thm}
    \label{thm:half_integral_area_upper_bound}
	Let \( P \) be rational polygon of denominator \( 2 \) having \( i
    \geq 1 \) interior and \( b \) boundary lattice points. If \( i
    \geq 2 \), we have the sharp bound
    \[
        \area(P)\ \leq\ \frac{3}{2}i + \frac{1}{4}b + \frac{1}{8}
        \cdot
		\begin{cases}
			8, & 0 \leq b \leq 3i+4 \\
			7, & b = 3i+5           \\
			6, & b = 3i+6.
		\end{cases}
    \]
    Moreover, if \( i = 1 \), we have the sharp bound
    \[
        \area(P)\ \leq\ \frac{1}{4}b + \frac{1}{8}\cdot
		\begin{cases}
            21, & 0 \leq b \leq 6 \\ 
            20, & b = 7 \\ 
            19, & b = 8 \\ 
            18, & b = 9.
		\end{cases}
    \]
\end{thm}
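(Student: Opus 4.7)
My plan is to extend the proof strategy of Theorem~\ref{thm:area_upper_bound} to $k=2$ where possible, and to treat the exceptional regimes $b=0$ and $i=1$ by separate arguments. A direct substitution of $k=2$ into the bound of Theorem~\ref{thm:area_upper_bound} reproduces exactly the stated bound of Theorem~\ref{thm:half_integral_area_upper_bound} whenever $i\geq 2$ and $b\geq 1$: for $1\leq b\leq b_{\max}-2$ one computes $\frac{k(k+1)^2(i+1)-2-k(\tilde b-2)}{2k^2} = \frac{3i}{2}+\frac{b}{4}+1$, and the analogous identities hold at $b=b_{\max}-1$ and $b=b_{\max}$. Thus in this regime it suffices to re-run the proof of Theorem~\ref{thm:area_upper_bound}, verifying that the hypothesis $k\geq 4$ is not genuinely required when $i\geq 2$ and $b\geq 1$; sharpness is witnessed by the maximizers (1a), (2a), (2c) of Theorem~\ref{thm:area_upper_bound} with $k=2$ substituted.

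For the case $b=0$ with $i\geq 2$, the stated bound $\tfrac{3i}{2}+1$ exceeds the Theorem~\ref{thm:area_upper_bound} extrapolation by exactly $\tfrac{1}{8}$, so a new maximizer is required. My approach is to pass to the doubled lattice polygon $\widetilde{P}:=2P$ and apply Pick's formula: $\area(P)=\tfrac{1}{4}\bigl(i'+\tfrac{b'}{2}-1\bigr)$, where $i',b'$ denote the interior and boundary lattice point counts of $\widetilde{P}$. The hypothesis $b=0$ translates to the absence of $2\Z^2$-points on $\partial\widetilde{P}$, which gives combinatorial parity constraints on the edge lengths of $\widetilde{P}$. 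Optimising $i'+b'/2$ under these constraints, combined with Scott's inequality applied to $\widetilde{P}$, should yield the improved bound, with a half-integral triangle of denominator $2$ as an explicit maximizer.

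For the case $i=1$, the inner lattice polygon $Q:=\conv(P\cap\Z^2)$ collapses to a single point (or segment) and the flap strategy underpinning Theorem~\ref{thm:area_upper_bound} degenerates. Here I would exploit finiteness: by Theorem~\ref{thm:scott_generalization} we have $b\leq 9$, and a classical Hensley-type bound on the lattice width ensures that the half-integral polygons with a single interior lattice point form a finite family up to affine unimodular equivalence. I would normalise the interior lattice point to the origin, bound the vertices in a small window using a lattice width estimate, and for each $b\in\{0,\ldots,9\}$ identify the area maximum; the bound should be attained by polygons from the Theorem~\ref{thm:area_upper_bound} families for $b\in\{7,8,9\}$ and by new triangular or quadrilateral families for smaller $b$, with the loss of a boundary lattice point translating to the $\tfrac{1}{8}$-decrements visible in the stated bound.

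The hardest step is the $i=1$ case: the flap decomposition used for Theorem~\ref{thm:area_upper_bound} no longer provides a handle, and a uniform argument covering all ten values of $b$ is required, most likely via a careful width-normalisation analysis followed by a direct construction of each maximizer. A secondary subtlety is verifying that the proof of Theorem~\ref{thm:area_upper_bound} really does extend to $k=2$ in the range $i\geq 2$, $b\geq 1$; this amounts to tracing through each case of that proof and checking the small-$k$ edge cases by hand, since several of the numerical estimates there are tight only when $k$ is large.
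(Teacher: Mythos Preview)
Your plan has a genuine gap in the generic regime $i\geq 2$, $b\geq 1$. The proof of Theorem~\ref{thm:area_upper_bound} does not merely use $k\geq 4$ in some numerical inequalities that could be checked by hand for $k=2$; rather, the very first step is the reduction of $P$ to the strip $\R\times[-1,\tfrac{1}{k}]$ via Lemmas~\ref{lem:area_bound_not_m1p1} and~\ref{lem:area_bound_not_small_schlauch}, and this reduction is simply \emph{false} for $k=2$. Remark~\ref{rem:area_maximizers_in_small_strip} already warns that half-integral area maximizers exist which cannot be realized in $\R\times[-1,\tfrac{1}{2}]$. So the part you label a ``secondary subtlety'' is in fact the main obstacle: once you trace through that proof you will find an entire class of polygons (those of larger lattice width) for which you have no argument at all. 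In particular, your separate treatment of $b=0$ is not a boundary case but a symptom of this missing class, and the same class also contains maximizers for many values $b\geq 1$.

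The paper's actual proof is structurally different from a re-run of Theorem~\ref{thm:area_upper_bound}. It normalizes so that $(0,1)$ is a lattice width direction and partitions by the number $n$ of interior integral horizontal lines. For $n=1$ it splits into the ``in $\R\times[-1,\tfrac12]$'' subcase (where the argument of Theorem~\ref{thm:area_upper_bound} does apply, cf.\ Remark~\ref{rem:area_maximizers_in_small_strip}) and the remaining subcase in $\R\times[-1,1]$, handled by a dedicated Lemma~\ref{lem:half_integral_area_bound_m1p1}; this lemma is what supplies both the $b=0$ maximizers and a second family of maximizers for $b\geq 1$. For $n\geq 2$ the paper uses a horizontal slab decomposition that gives the bound once $i$ is large enough, and then disposes of the finitely many remaining small-$i$ and borderline-$n$ cases (including all of $i=1$) by appealing to the computer classification~\cite{BS24}. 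Your Pick-on-$2P$ idea for $b=0$ and finiteness idea for $i=1$ might be recoverable as alternatives within their respective subcases, but you still need an argument of Lemma~\ref{lem:half_integral_area_bound_m1p1} type, plus something covering $n\geq 2$, before the generic case $i\geq 2$, $b\geq 1$ is complete.
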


\section{Proofs of Theorems 1.1, 1.2 and 1.3}
\label{sec:proofs_theorems_1.1_1.2_and_1.3}

For any polygon \( P \), we write \( i(P) \) for the number of
interior lattice points and \( b(P) \) for the number of boundary
lattice points. We call the lattice polygon \( \conv(P \cap \Z^2) \)
the \emph{integer hull} of \( P \). We call two polygons \( P \) and
\( Q \) \emph{equivalent}, if \( P = UQ + z \) holds for a unimodular
matrix \( U \in \Z^{2\times 2} \) and some \( z \in \Z^2\).

Let us recall the classification of lattice polygons without interior
lattice points.

\begin{prop}[See {\cite[Theorem 4.1.2]{Koe91}}]
    \label{prp:hollow_lattice_polygons}

    Let \( P \) be a lattice polygon with \( i(P) = 0 \). Then \( P \)
    is either equivalent to the twofold standard triangle \(
    \conv((0,0),(2,0),(0,-2)) \) or to a polygon of the form 
    \(\conv((0,0),(0,-1),(n,-1),(m,0)) \) with integers \( n \geq m \geq 0 \).

\end{prop}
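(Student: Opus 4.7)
The plan is to classify $P$ by analyzing its lattice width $\lw(P)$ and then normalizing via unimodular equivalences. The principal step, and chief obstacle, is to establish the sharp bound $\lw(P) \leq 2$, a classical result in two-dimensional lattice geometry. One argues by contradiction, placing $P \subseteq \{0 \leq y \leq w\}$ with $w = \lw(P) \geq 3$ and lattice vertices on both bounding lines; the interior lattice lines $y = 1, \ldots, w-1$ then cut $P$ in segments whose relative interiors lie in the topological interior of $P$. A length-and-denominator analysis of these segments, whose endpoints are rational with denominator dividing $w$, shows that one of them must contain a lattice point, contradicting hollowness.

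Once $\lw(P) \leq 2$ is established, the remaining classification splits into two cases. If $\lw(P) = 1$, apply an equivalence so that $P \subseteq \{-1 \leq y \leq 0\}$ with lattice vertices on both bounding lines. Since a convex polygon has at most two vertices on each of two parallel lines, $P$ is the convex hull of at most four lattice points. A translation and a horizontal shear normalize two of them to $(0, 0)$ and $(0, -1)$, writing $P$ as $\conv((0, 0), (0, -1), (n, -1), (m, 0))$ with $m, n \geq 0$. A reflection in the $x$-axis combined with a translation swaps the roles of the two bounding lines, so after possibly applying it we may assume $n \geq m$, yielding the claimed form.

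If $\lw(P) = 2$, place $P \subseteq \{0 \leq y \leq 2\}$ with lattice vertices on both bounding lines. The line $y = 1$ cuts $P$ in a segment whose relative interior lies in the topological interior of $P$ and hence contains no lattice point. Its endpoints are midpoints of edges joining lattice vertices at $y = 0$ and $y = 2$, so they lie in $\tfrac{1}{2}\Z$; the segment itself has length $\tfrac{1}{2}(m_0 + m_2)$, where $m_0$ and $m_2$ are the horizontal lengths of the bottom and top edges of $P$ (possibly zero). Hollowness, combined with the parity of these half-integer endpoints, forces $m_0 + m_2 \leq 2$; ruling out the quadrilateral and triangle configurations that already have $\lw(P) = 1$ leaves only $\{m_0, m_2\} = \{0, 2\}$, which after normalization yields the twofold standard triangle $\conv((0, 0), (2, 0), (0, -2))$.
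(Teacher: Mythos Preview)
The paper does not prove this proposition; it is quoted from \cite{Koe91} without argument, so there is no in-paper proof to compare against. Your overall strategy---bound $\lw(P)\le 2$ and then split into the cases $\lw(P)=1$ and $\lw(P)=2$---is the standard route and structurally sound, but two steps do not go through as written.

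First, in your sketch for $\lw(P)\le 2$, the claim that the endpoints of the slice $P\cap(\R\times\{j\})$ have denominator dividing $w$ is not justified: an edge between lattice vertices at heights $c$ and $d$ meets $y=j$ at an $x$-coordinate whose denominator divides $|d-c|$, and nothing forces every edge to run from $y=0$ to $y=w$. A valid argument must either rule out intermediate vertices first or proceed differently (for instance, compare slices at two consecutive interior heights, or invoke Pick's formula on a suitable subtriangle).

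Second, in the $\lw(P)=2$ case you assert that the endpoints of the slice at $y=1$ are midpoints of edges joining $y=0$ to $y=2$, and hence that the slice has length exactly $\tfrac12(m_0+m_2)$. This presupposes that $P$ has no lattice vertex on the line $y=1$. If it does, concavity of the right boundary and convexity of the left only give the inequality $\tfrac12(m_0+m_2)\le\text{(slice length)}$, which still yields $m_0+m_2\le 2$; but the polygon is then no longer determined by $(m_0,m_2)$ together with the bottom and top edges, so your final enumeration (``ruling out configurations with $\lw(P)=1$ leaves only $\{m_0,m_2\}=\{0,2\}$'') is incomplete. You need a separate argument that a hollow lattice polygon in $\{0\le y\le 2\}$ with a lattice vertex at height $1$ necessarily has lattice width $1$; this is true, but it requires checking.
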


\begin{proof}[Proof of Theorem~\ref{thm:scott_generalization}.]
    Consider the integer hull
	\( Q := \conv(P \cap \Z^2) \). We may assume \( Q \) to be
	two-dimensional, otherwise we have \( b \leq 2 \)
	and the claim holds trivially. If \(i(Q)>0\), then we
	get with Scott's inequality
	\[
		b\ \leq\ b(Q)\ \leq\ 2i(Q)+7\ \leq\ 2i+7\ \leq\ (k+1)(i+1)+3.
	\]
    This can only be an equality for \( (k,i) = (2,1) \) and \( Q \)
    being the threefold standard lattice triangle. But since \( P \)
    is not a lattice polygon, it must contain a non-integral vertex
    outside of \( Q \). Since the threefold standard lattice triangle
    has a lattice point in the relative interior of each edge, this
    implies \( b < b(Q) \), hence the inequality is strict after all.

    If \( i(Q) = 0 \), Proposition~\ref{prp:hollow_lattice_polygons}
    says that \( Q \) is either the twofold standard
	lattice triangle or of the form \( \conv((0,0),(0,-1),(n,-1),(m,0))
	\) for integers \( n \geq m \geq 0 \). In the former case, we have
	\(b \leq b(Q) = 6 \) and the claim holds trivially. Let us treat the
	latter case. Since \( P \) has at least one interior lattice point,
	there must be a vertex \( v = (x,y) \) of \( P \) which lies outside
	of the strip \( \R \times [-1,0] \). If \( y < -1 \), we have \( i
	\geq n-1 \), hence
	\[
		b\ \leq\ m+3\ \leq\ n+3\ \leq\ i+4\ <\ (k+1)(i+1)+3.
	\]
	Now assume \( y > 0 \). Then we maximize the number of boundary
	lattice points if \( (-1,l) \) are all boundary points of \( P \) for
	\( 0 \leq l \leq n \) and \( n \) is as large as possible. This
	happens when \( y \) is as small as possible i.e. \( y = \frac{1}{k}
	\). By convexity, the largest possible \( n \) is \( 
    (k+1)(i+1) \). Then we obtain the triangle from the assertion and
    \( b = n + 3 \) as claimed. 
\end{proof}

\begin{rem}
    \label{rem:generalized_scott_k1}
    Plugging \( k = 1 \) into
    Theorem~\ref{thm:scott_generalization} gives \( i \leq 2i+5 \),
    which is less than Scott's inequality. The difference is explained
    as follows: First, the threefold standard lattice triangle is an
    exception which only exists for \( k = 1 \). Second, if we take \(
    k = 1\) in the triangle from
    Theorem~\ref{thm:scott_generalization}, the vertex \(
    (0,\frac{1}{k}) \) is itself a lattice point, hence it has one
    more boundary lattice point than expected.
\end{rem}

In the following proof, it will be convenient for us to normalize our
area measure with respect to the \( \frac{1}{k} \)-fold standard
triangle instead of the unit square. To that end, we define the
\emph{\( k \)-normalized area} of \( P \) as 
\[ 
    \Area_k(P)\ :=\ 2k^2\area(P), 
\]
where \( \area(P) \) denotes the standard euclidian
area. Note that \( \Area_k(P) \) is always an integer.

\begin{proof}[Proof of Theorem~\ref{thm:area_lower_bound}.]
We treat first the case \( \dim(Q) = 2 \). In terms of the \( k
\)-normalized area, we have to show the following:
\[
\Area_{k}(P)\ \geq\ k^2(i+b-2)+k(i+1).
\]
Pick's formula gives us a first bound
\[
	\Area_{k}(P)\ \geq\ \Area_{k}(Q)\ =\ k^2\left(2i(Q)+b(Q)-2\right).
\]
Consider an interior lattice point \( A \) of \( P \) which lies on
the boundary of \( Q \) and let \( B \) be a boundary lattice point of
\( Q \) adjacent to \( A \). Pick a rational point \( C \in (P
\setminus Q) \cap \frac{1}{k}\Z^2 \) lying over the edge of \( Q \)
which contains \( A \) and \( B \) and consider the triangle with
vertices \( A, B \) and \( C \). Its base has length one and its height
is at least \( \frac{1}{k} \). Hence it has \( k \)-normalized area of at
least \( k \). Choosing the same \( C \) for as many pairs \( (A,B) \)
as possible gives us at least \( i - i(Q) + 1 \) triangles of this
form with disjoint interiors. We obtain \(\Area_{k}(P) - \Area_{k}(Q)\
\geq\ k(i - i(Q) + 1) \).

\begin{center}
    \begin{tikzpicture}[x=1.5cm,y=1.5cm]

        \draw [fill=black] (0,0) circle (2.0pt);
        \draw [fill=black] (1,0) circle (2.0pt);
        \draw [fill=black] (2,0) circle (2.0pt);
        \draw [fill=black] (3,0) circle (2.0pt);

        \draw[line width = 1pt] (-0.25,-0.25) -- (0,0) -- (3,0) --
            (3.25,-0.25);
        \draw[line width = 1pt, dashed] (-0.25,-0.25) -- (-0.6,-0.6);
        \draw[line width = 1pt, dashed] (3.25,-0.25) -- (3.6,-0.6);

        \draw[line width=1pt] (0,0) -- (1.5,0.5);
        \draw[line width=1pt] (1,0) -- (1.5,0.5);
        \draw[line width=1pt] (2,0) -- (1.5,0.5);
        \draw[line width=1pt] (3,0) -- (1.5,0.5);

        \fill[opacity=0.2] (0,0) -- (3,0) -- (1.5,0.5) -- cycle;

        \node at (1,-0.2) {\( A \)};
        \node at (2,-0.2) {\( B \)};
        \node at (1.5,0.7) {\( C \)};

    \end{tikzpicture}
\end{center}
Using \( b(Q)=b+i-i(Q) \), we compute
\begin{align*}
    \Area_k(P) & \geq k^2(2i(Q) + b(Q) - 2) + k(i-i(Q)+1) \\
              & = k^2(i+b-2) + k(i+1) + i(Q)(k^2-k) \\
              & \geq k^2(i+b-2) + k(i+1).
\end{align*}
To get equality, we need \( i(Q) = 0 \) and \( \Area_k(P) - \Area_k(Q) =
k(i+1) \). The latter happens if and only if there is a unique point
\( C \in (P \setminus Q) \cap \frac{1}{k}\Z^2 \) having lattice
distance \( \frac{1}{k} \) to all relevant edges of \( Q \). By
Proposition~\ref{prp:hollow_lattice_polygons}, \( Q \) is either the
twofold standard triangle or of the form \(
\conv((0,0),(0,-1),(n,-1),(m,0)) \) for \( n \geq m \geq 0 \). In the
former case, we get the polygons (1b) and (2b) from the assertion. In
the latter case, we get the polygons (0a), (1a) and (2a). Here, the
number before the letter denotes the number of boundary lattice points
of \( P \) on \( \R \times \{0\} \), which is at most two. Note that
by symmetry, for the polygons (2a) it is enough to consider \( x = 0,
\dots, \lfloor \frac{b_{\max} - b}{2} \rfloor \). Similarly, for (2b),
\( x' = 0, \dots, k \) is enough. See also
figure~\ref{fig:area_minimizers} for an illustration.

Now we consider \( \dim(Q) < 2 \), i.e. the lattice points of \( P \)
are collinear. In particular, we have \( b \leq 2 \). After an affine
unimodular transformation, we may assume \( Q \subseteq \R \times
\{0\} \). In terms of the \( k \)-normalized area, we have to show
\[
	\Area_k(P) \geq
	\begin{cases}
		2k(i+1),   & b = 2, \\
		2ki+1,     & b = 1, \\
		2k(i-1)+3, & b = 0.
	\end{cases}
\]
Note that by Pick's formula, we have
\begin{equation}
    \label{eq:pick_scaled}
    \Area_k(P)\ =\ \Area_1(kP)\ =\ 2i(kP) + b(kP) - 2.
\end{equation}

We start with the case \( b = 2 \). Then \( kQ \) has exactly \(
k(i+1)-1 \) lattice points in its relative interior, hence \( i(kP)
\geq k(i+1)-1 \). Moreover, \( kP \) must have at least one lattice
point (a vertex) above and below of \( Q \). Together with the two
boundary lattice points on \( Q \) itself, this gives \( b(kP) \geq 4
\). Plugging this into equation~(\ref{eq:pick_scaled}), we arrive at
the desired bound and equality holds if and only if \( i(kP) =
k(i+1)-1 \) and \( b(kP) = 4 \). Up to affine unimodular equivalence,
there are exactly \( k(i+1)+1 \) ways to choose boundary points of \(
kP \) without getting additional interior lattice points, which are
exactly the polygons (2c) from the assertion. 

For \( b = 1 \), we obtain with analogous reasoning \( i(kP) \geq ki
\) and \( b(kP) \geq 3 \). The bound again follows from
Equation~(\ref{eq:pick_scaled}). To get equality, there is up to
affine unimodular transformation only one possible choice of boundary
points of \( kP \), which gives us the polygon (1c) from the
assertion.

For \( b = 0 \), we have \( i(kP) \geq k(i-1)+1 \) and \( b(kP) \geq
3 \), which yields the desired bound. Again, to get equality, there
is only one possibility, namely the polygon (0c) from the assertion.
See also figure~\ref{fig:area_minimizers_collinear_integer_hull} for
an illustration.
\end{proof}

\begin{rem}
    \label{rem:area_minimizers_b_2}

    In Theorem~\ref{thm:area_lower_bound}, the polygon (1a) for \( b =
    2 \) plays a special role: While it is always an area minimizer
    with respect to polygons with two boundary lattice points having
    a two-dimensional integer hull, it is not in general an area
    minimizer with respect to all polygons with two boundary lattice
    points. This is because the bound for \( \dim(Q) < 2 \)
    and \( b = 2 \) (namely \( \area(P) \geq \frac{i+1}{k} \)) is in general smaller. The only exception is the
    case \( (k,i) = (2,1) \), where both bounds agree. This is the
    only case where (1a) for \( b = 2 \) truly is an area minimizer.
    
\end{rem}

We turn to upper bounds on the area, i.e.
Theorem~\ref{thm:area_upper_bound}. Let \( k \geq 2 \) and \( i \geq 1
\). Set \( b_{\max} := (k+1)(i+1)+3 \) and let \( 0 \leq b \leq
b_{\max} \). Then we intend to prove that the \( k \)-normalized
area of a rational polygon with denominator \( k \) having \( i \)
interior and \( b \) boundary lattice points is not greater than
\[
	\maxarea(k,i,b)\ :=\ k(k+1)^2(i+1) -
		\begin{cases}
			\tilde b,              & b \in \{b_{\max}, b_{\max}-1\} \\
			(2 + k(\tilde b - 2)), & 1 \leq b \leq b_{\max}-2       \\
			(3 + k(\tilde b - 2)), & b = 0
		\end{cases}
\]
where \( \tilde b := b_{\max} - b \).

\begin{lemma}
	\label{lem:area_bound_not_m1p1}

	Let \( P \) be a rational polygon with denominator \( k \) having \(
	i \) interior and \( b \) boundary lattice points. Assume \( P \) is
	not equivalent to a polygon in \( \R \times [-1,1] \). Then we have

	\[
		\Area_{k}(P) \leq
		\max\left(k^2(4i+5),\ \frac{1}{2}k(k+2)^2(i+1)\right).
	\]
	In particular, for \( k \geq 4 \) we have \( \Area_k(P) <
	\maxarea(k,i,b) \).
\end{lemma}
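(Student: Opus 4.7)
The hypothesis is equivalent to the lattice width bound $\lw(P) > 2$, and since $P$ has denominator $k$ this forces $\lw(P) \geq 2 + \tfrac{1}{k}$. The plan is to exploit this "thickness" to derive a $b$-independent upper bound on $\Area_k(P)$, and then verify by a direct algebraic comparison that this bound is strictly smaller than $\maxarea(k,i,b)$ whenever $k \geq 4$.

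I would begin by passing to the integer hull $Q := \conv(P \cap \Z^2)$ and decomposing $\area(P) = \area(Q) + \sum_E \area(C_E)$, where $C_E$ is the thin cap lying between an edge $E$ of $Q$ and the corresponding portion of $\partial P$. Each cap has lattice height at most $\tfrac{1}{k}$ above $E$, so its contribution to $\Area_k(P) = 2k^2\area(P)$ is bounded by $k$ times the lattice length of $E$; the whole sum is therefore at most $k$ times the lattice perimeter of $Q$. Meanwhile, $\Area_k(Q) = k^2(2i(Q) + b(Q) - 2)$ by Pick's formula.

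I would then split into cases according to the integer hull. If $\dim Q = 2$ and $\lw(Q) \geq 2$, Scott's inequality gives $b(Q) \leq 2i(Q) + 6$, so $\Area_k(Q) \leq k^2(4i(Q) + 4)$; combined with the cap estimate this lands in the first term $k^2(4i+5)$ of the claimed maximum. If instead $\lw(Q) \leq 1$, then either $\dim Q \leq 1$, or $\dim Q = 2$ with $i(Q) = 0$, in which case Proposition~\ref{prp:hollow_lattice_polygons} classifies $Q$ as a strip polygon. In these cases the thickness $\lw(P) > 2$ forces $P$ to carry a non-integral vertex strictly outside a lattice-width-two strip containing $Q$, and convexity together with $i \geq 1$ limits how far that vertex can protrude, producing the second term $\tfrac{1}{2}k(k+2)^2(i+1)$.

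The main difficulty is keeping the cap estimate tight enough in the first case: the lattice perimeter of $Q$ can be as large as $b(Q) = 2i(Q) + 6$, so one must use that the lattice points appearing on the caps are already counted in $b(Q)$ rather than producing new ones, in order to avoid losing a factor. The concluding comparison $\max\bigl(k^2(4i+5),\, \tfrac{1}{2}k(k+2)^2(i+1)\bigr) < \maxarea(k,i,b)$ for $k \geq 4$ is then a routine piecewise check in $b$ using $b \leq b_{\max} = (k+1)(i+1) + 3$ and $i \geq 1$; once $k \geq 4$, the leading term $k(k+1)^2(i+1)$ of $\maxarea$ dominates both pieces of the maximum.
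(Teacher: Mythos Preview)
Your concluding comparison for the supplement is correct and is exactly what the paper does: one checks that both $k^2(4i+5)$ and $\tfrac{1}{2}k(k+2)^2(i+1)$ lie strictly below $k^2(k+1)(i+1)-k-3=\maxarea(k,i,0)$, and then observes that $\maxarea(k,i,0)\le\maxarea(k,i,b)$ for all $b$.

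For the main area bound, however, note that the paper does not prove it here; it simply cites \cite[Proposition~7.2]{BS24}. Your attempt to supply a self-contained argument has a genuine gap. The assertion that each cap $C_E$ has lattice height at most $\tfrac{1}{k}$ above the edge $E$ of the integer hull is unjustified and false in general: the only thing that $Q=\conv(P\cap\Z^2)$ forces is that $C_E$ contains no lattice point in its interior, which bounds the lattice height of the cap by just under $1$, not by $\tfrac{1}{k}$. With cap height only bounded by $1$, the total cap contribution to $\Area_k(P)$ is of order $k^2\,b(Q)$ rather than $k\,b(Q)$, and after inserting Scott's inequality this overshoots the target $k^2(4i+5)$.

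More decisively, in your first case you never invoke the hypothesis that $P$ cannot be realized in $\R\times[-1,1]$, yet without it the claimed bound is false. The triangle $\conv\bigl((0,\tfrac{1}{k}),(0,-1),((k+1)(i+1),-1)\bigr)$ from Theorem~\ref{thm:scott_generalization} lies in $\R\times[-1,\tfrac{1}{k}]$ and has $\Area_k(P)=k(k+1)^2(i+1)$, which already for $k=4$, $i=1$ equals $200$, well above $\max(k^2(4i+5),\tfrac{1}{2}k(k+2)^2(i+1))=144$. So the width hypothesis must enter essentially in the first case too, and your integer-hull-plus-caps decomposition does not exploit it. The argument in \cite{BS24} that the paper cites uses the width constraint throughout, not only in the ``thin integer hull'' case.
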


\begin{proof}
    See \cite[Proposition 7.2]{BS24} for the first bound on \(
    \Area_k(P) \). For the supplement, note that \( k \geq 4 \) implies
    \[
		\max\left(k^2(4i+5),\ \frac{1}{2}k(k+2)^2(i+1)\right)\ <\ 
        k^2(k+1)(i+1) - k - 3.
    \]
    and \( k^2(k+1)(i+1)-k-3 = \maxarea(k,i,0) \leq \maxarea(k,i,b) \).
\end{proof}

\begin{lemma}
    \label{lem:area_bound_not_small_schlauch}
    Let \( P \) be a rational polygon of denominator \( k \geq 4 \)
    having \( i \) interior and \( b \) boundary lattice points.
    Assume \( P \) is not equivalent to a polygon in \( \R \times
    [-1,\frac{1}{k}] \). Then \( \Area_k(P) < \maxarea(k,i,b) \).
\end{lemma}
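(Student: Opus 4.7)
The plan is to combine the previous lemma with a direct area estimate in a strip of height two. First I will use Lemma~\ref{lem:area_bound_not_m1p1} to reduce to the case where, after a unimodular transformation, $P \subseteq \R \times [-1,1]$; otherwise that lemma already yields the conclusion. Under this reduction the hypothesis says that no further unimodular repositioning of $P$ lies in $\R \times [-1,1/k]$. In particular the lattice width of $P$ must equal exactly $2$: a polygon of lattice width $1$ fits in a lattice strip of height $1$, and such a strip can be translated by an integer vector into $\R \times [-1, 1/k]$, contradicting the hypothesis. I then fix the representative $P \subseteq \R \times [-1, y_*]$ minimizing the top coordinate $y_* \in \frac{1}{k}\Z_{\geq 0}$; the hypothesis forces $y_* \geq 2/k$.

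The next step is structural. With $P$ sitting in the strip $\R \times [-1,1]$, every interior lattice point of $P$ is forced onto the line $y = 0$, and every boundary lattice point lies on one of $y = -1, 0, 1$; write $b = b_{-1} + b_0 + b_1$, where $b_1 = 0$ unless $y_* = 1$. Letting $\ell(y)$ denote the length of the horizontal slice of $P$ at height $y$, the $i$ interior lattice points and $b_0 \leq 2$ boundary lattice points on $y = 0$ give $\ell(0) \geq i + b_0 - 1$, while $\ell(-1)$ and $\ell(y_*)$ are controlled by $b_{-1}, b_1$ together with the slope constraint that the two monotone boundary functions of $P$ have slopes in $\frac{1}{k}\Z \cup \{\infty\}$.

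I will then decompose
\[
    \area(P)\ =\ \int_{-1}^{0} \ell(y)\,dy\ +\ \int_{0}^{y_*} \ell(y)\,dy
\]
and bound each integral from above using concavity of $\ell$ (trapezoid for the bottom part, triangle for the top), to obtain an explicit upper bound for $\Area_k(P) = 2k^2 \area(P)$ in terms of $i, b_{-1}, b_0, b_1$ and $y_*$. Comparing this term by term with the piecewise formula for $\maxarea(k,i,b)$, the leading term $k^2(k+1)(i+1)$ will match, and the extra height $y_* - 1/k \geq 1/k$ (absent in every maximizer of Theorem~\ref{thm:area_upper_bound}) contributes a loss of order at least $k$ after multiplication by $2k^2$, which for $k \geq 4$ strictly beats the $k(\tilde b - 2)$ correction appearing in $\maxarea$.

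The main obstacle will be the extremal sub-case $y_* = 2/k$: here the integration bound is tightest, and I will need to use the slope constraint carefully to show that the extra $1/k$ of height can only be purchased by a definite loss of width at $y = 0$. The piecewise boundary cases $b \in \{0, 1, b_{\max}-1, b_{\max}\}$ in the formula for $\maxarea$ and the degeneracies where the slice at $y = 0$ passes through a vertex of $P$ will be handled case by case, and the assumption $k \geq 4$ is exactly what makes the final comparison strict, mirroring its role in Lemma~\ref{lem:area_bound_not_m1p1} and the failure of the bound for $k = 2$ recorded in Theorem~\ref{thm:half_integral_area_upper_bound}.
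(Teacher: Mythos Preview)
Your overall architecture (reduce via Lemma~\ref{lem:area_bound_not_m1p1} to a strip of height two, then bound the area in the strip) is the same as the paper's, but the implementation is both longer and contains genuine gaps. The paper does not integrate slices at all; after reducing to $P \subseteq \R \times [-1,1]$ it takes $h \in \{2,\dots,k\}$ minimal with $P \subseteq \R \times [-1,h/k]$ and invokes \cite[Proposition~3.1]{Boh23}, which gives directly
\[
    \Area_k(P)\ \leq\ k\!\left(\tfrac{k^2}{h}+2k+h\right)(i+1).
\]
This is maximized at $h=2$, and one checks for $k\geq 4$ that $k\bigl(\tfrac{k^2}{2}+2k+2\bigr)(i+1) < k^2(k+1)(i+1)-k-3 = \maxarea(k,i,0) \leq \maxarea(k,i,b)$. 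That is the entire proof.

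Two concrete problems in your plan. First, you record $\ell(0)\geq i+b_0-1$, but for an upper bound on the area you need an \emph{upper} bound on the slice; the relevant inequality is $\ell(0)\leq i+1$. Relatedly, the ``trapezoid bound'' for a concave function is a \emph{lower} bound on the integral, not an upper bound, so your decomposition as written goes in the wrong direction. Second, and more seriously, your endgame cannot work as stated: you assert that the leading term of your estimate matches that of $\maxarea$ and that the extra $y_*-1/k$ of height contributes a loss of order $k$ which beats the correction $k(\tilde b-2)$. But $\tilde b$ can be as large as $b_{\max}=(k+1)(i+1)+3$, so $k(\tilde b-2)$ is of order $k^2 i$, and a loss of order $k$ is nowhere near enough. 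The actual mechanism (visible in the cited bound) is that the \emph{leading} coefficient drops: for $h\geq 2$ one has $\tfrac{(k+h)^2}{h}\leq \tfrac{(k+2)^2}{2}$, and $\tfrac{(k+2)^2}{2}<k(k+1)$ exactly when $k^2-2k-4>0$, i.e.\ $k\geq 4$. So the saving against $\maxarea(k,i,0)=k^2(k+1)(i+1)-k-3$ is of order $k^3 i$, not $k$, and this is what you must extract from your slice integration if you want to avoid citing \cite{Boh23}.
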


\begin{proof}
By Lemma~\ref{lem:area_bound_not_m1p1}, we can assume that \( P
\subseteq \R \times [-1,1] \). Let \( h \in \{2,\dots, k\} \) be
minimal such that \( P \subseteq \R \times [-1,\frac{h}{k}] \). Then
by \cite[Proposition 3.1]{Boh23}, we have \(\Area_{k}(P)\leq
k\left(\frac{k^2}{h}+2k+h\right)(i+1).\) For \( k \geq 4 \), this
attains its maximum at \( h = 2 \). We obtain
\[
	\Area_k(P) \leq k\left( \frac{k^2}{2}+2k+2\right)(i+1) <
	k^2(k+1)(i+1)-k-3 \leq \maxarea(k,i,b),
\]
where we again used \( k \geq 4 \) to get the strict inequality in the
middle.
\end{proof}

\begin{proof}[Proof of Theorem~\ref{thm:area_upper_bound}.]
By Lemma~\ref{lem:area_bound_not_small_schlauch}, we may assume \( P
\subseteq \R \times [-1,\frac{1}{k}] \). Throughout our proof, we use
the following shorthand notation: For any \( -1 \leq y \leq
\frac{1}{k} \) as well as \( -1 \leq y_1 < y_2 \leq \frac{1}{k} \), we
write
\[
P_y\ :=\ P \cap (\R \times \{y\}), \qquad P_{[y_1,y_2]}\ :=\ P \cap
(\R \times [y_1,y_2]).
\]
After a translation, we can assume \( P_0 \subseteq [0,i+1] \times
\{0\} \). Next, we apply a shearing to achieve that the leftmost
vertex in \( P_{1/k} \) is \( (0,\frac{1}{k}) \). Convexity then
implies that \( P_{[-1,0]} \) is contained in the convex hull of \(
(0,0), (0,-1), ((k+1)(i+1),-1) \) and \( (i+1,0) \). In particular,
the line segment \( P_{-1} \) is contained in \( [0,(k+1)(i+1)] \times
\{-1\} \). Write \( b_0 \) and \( b_{-1} \) for the number of boundary
point of \( P \) in \( P_0 \) and \( P_1 \) respectively. Then \( b =
b_0 + b_{-1} \) and \( b_0 \in \{0,1,2\} \). The length of \( P_{-1}
\) is restricted by the number of lattice points \( b_{-1} \) on
it, i.e.
\begin{equation}
    \label{eq:length_P_minus_one}
	\length(P_{-1})\ \leq\ 
	\begin{cases}
        (k+1)(i+1), & b_{-1} = b_{\max}-2 \\
        (k+1)(i+1)-\frac{1}{k}, & b_{-1} = b_{\max}-3 \\
		b_{-1}+1-\frac{2}{k}, & 0 \leq b_{-1} \leq b_{\max}-4.
	\end{cases}
\end{equation}
Consider now the line segment \( P_{-1+1/k} \), which is contained in
\( [0,k(i+1)] \times \{-1+1/k\} \). We obtain the following bound:
\begin{equation}
    \label{eq:length_P_minus_one_plus_one_over_k}
    \length(P_{-1+1/k})\ \leq\ k(i+1) - \frac{2-b_0}{k+1}.
\end{equation}
We consider the decomposition \( P = P_{[-1,-1+1/k]} \cup
P_{[-1+1/k,1/k]} \). Note that the area of \( P_{[-1+1/k,1/k]} \) is
maximized if and only if (\ref{eq:length_P_minus_one_plus_one_over_k})
is an equality and \( (0,\frac{1}{k}) \) is the only vertex in \(
P_{1/k} \). This is because \( P_{[-1+1/k,0]} \) cannot be larger
anyway and using an additional vertex in \( P_{1/k} \) would lose more
area in \( P_{[-1+1/k,0]} \) than we could gain in \( P_{[0,1/k]} \).
On the other hand, \( P_{[-1,-1+1/k]} \) is just a trapezoid, hence
its area is maximized if and only if (\ref{eq:length_P_minus_one}) and
(\ref{eq:length_P_minus_one_plus_one_over_k}) are both equalities. We
now proceed by case distinction on \( b_0 \in \{0,1,2\} \) and
describe all polygons satisfying these conditions, which gives us all
area maximizers.

We first treat \( b_0 = 2 \), which implies \( b \geq 2 \) and \(
b_{-1} = b - 2 \). Here, we achieve equality in
(\ref{eq:length_P_minus_one_plus_one_over_k}) if we have edges
connecting the vertex \( (0,\frac{1}{k}) \) to \( (0,-1+\frac{1}{k})
\) and \( (k(i+1),-1+\frac{1}{k}) \). For \( b = b_{\max} \), there is
only one possibility to achieve equality in
(\ref{eq:length_P_minus_one}), namely the polygon (2c). For \( b =
b_{\max}-1 \), there are a priori two possiblieties of maximizing the
length of \( P_{-1} \). However, they are equivalent by symmetry,
hence there is only one polygon (2b). For \( 2 \leq b \leq b_{\max}-2
\), we obtain the family of polygons (2a), which arises from different
translations of the line segment \( P_{-1} \). A priori, we have \( x
= 0, \dots, \tilde b - 2 \). However, by symmetry, it is enough to
consider \( x = 0, \dots, \lfloor \frac{\tilde b}{2} \rfloor - 1 \).
Computing the area of (2a), (2b) and (2c), we arrive at the bound from
the assertion.

Next, we treat \( b_0 = 1 \). By symmetry, we can assume that the
unique boundary lattice point on \( P_0 \) is \( (i+1,0) \). Then we
achieve equality in (\ref{eq:length_P_minus_one_plus_one_over_k}) if
we have edges connecting the vertex \( (0,\frac{1}{k}) \) to \(
(\frac{1}{k},-1) \) and \( (k(i+1),-1+\frac{1}{k}) \). By convexity,
we can maximize \( \length(P_{-1}) \) (i.e. achieve equality in
(\ref{eq:length_P_minus_one})) only for \( b \leq b_{\max}-3 \),
which gives us the polygons (1a) from the assertion. Note that these
have exactly the same area as the polygons (2a) for fixed \( b \leq
b_{\max}-3 \) and we cannot reach this bound for \( b \) larger and \(
\length(P_{-1}) \) not maximal. Moreover, since \( (0,\frac{1}{k}-1)
\) is no longer a vertex for (1a), there is no flexibility in moving
\( P_{-1} \) around. Hence for each \( 1 \leq b \leq b_{\max}-3 \),
we only get one polygon of type (1a).

Lastly, for \( b_0 = 0 \), simultaneous equality in
(\ref{eq:length_P_minus_one}) and
(\ref{eq:length_P_minus_one_plus_one_over_k}) can only be reached for
\( b = b_{\max}-4 \), in which case connecting \( (0,\frac{1}{k}) \)
to \( (\frac{1}{k},-1) \) and \( (b+1-\frac{1}{k},-1) \) gives the
triangle (0a). It has the same area as the polygons corresponding
polygons (1a) and (2a). This leaves open the case \( b = 0 \). Here,
we cannot reach equality in
(\ref{eq:length_P_minus_one_plus_one_over_k}), since this would fix \(
P_{-1} \) and lead to \( b > 0 \). The best we can do in this case is
to connect \( (0,\frac{1}{k}) \) to \( (\frac{1}{k},-1) \) and \(
(k(i+1)-\frac{1}{k},\frac{1}{k}-1) \). The line segment \( P_{-1} \)
is then already uniquely determined, hence we arrive at the polygon
(0b). It has the claimed area from the assertion, which is notably
smaller than in the other cases by one unit of \( k \)-normalized area.
\end{proof}

\begin{rem}
    \label{rem:area_bounds_k_1}
    We can compare our area bounds to Pick's formula by plugging \(
    k = 1\) into Theorems~\ref{thm:area_lower_bound}
    and~\ref{thm:area_upper_bound}. Ignoring the cases \( b \leq 2 \)
    (which cannot happen for lattice polygons), we arrive at \(
    i+\frac{b}{2}-\frac{1}{2} \) for both the lower and upper bound.
    This is notably \( \frac{1}{2} \) more than what Pick's formula
    says. The difference can be explained as follows: All of our area
    maximizers and minimizers described in
    Theorems~\ref{thm:area_lower_bound} and~\ref{thm:area_upper_bound}
    have a rational vertex at \( y = \frac{1}{k} \). For \( k = 1 \),
    this would become a boundary lattice point, hence the polygon has
    one more boundary lattice point than it would have for \( k > 1
    \). Substituting \( b \mapsto b+1 \) into Pick's formula, we
    recover the same expression \( i + \frac{b}{2} - \frac{1}{2} \).
\end{rem}

\begin{rem}
\label{rem:area_maximizers_in_small_strip}

In the proof of Theorem~\ref{thm:area_upper_bound}, the condition \( k
\geq 4 \) was only neccessary to ensure \( P \subseteq \R \times
[-1,\frac{1}{k}] \) by applying the bounds from
lemmas~\ref{lem:area_bound_not_m1p1}
and~\ref{lem:area_bound_not_small_schlauch}. In particular, the area
bound of Theorem~\ref{thm:area_upper_bound} holds for all polygons in
\( \R \times [-1,\frac{1}{k}] \), even if \( k \in \{2,3\} \).
However, as we
will see in Section~\ref{sec:half_integral_polygons}, not all
area maximizers for \( k = 2 \) are contained in \( \R \times
[-1,\frac{1}{2}] \). For \( k = 3 \), we used our classification of
rational polygons \cite{BS24} to verify that
Theorem~\ref{thm:area_upper_bound} holds completely for \( 1 \leq i
\leq 5 \). In particular, there are no maximizers outside of \( \R
\times [-1,\frac{1}{3}] \). This leads us to conjecture that the
bounds from lemmas~\ref{lem:area_bound_not_m1p1}
and~\ref{lem:area_bound_not_small_schlauch} can be improved to work
also for \( k = 3 \).

\end{rem}

\begin{rem}
	\label{rem:area_attainers_in_between_min_max}
	Let \( P \) be an area maximizer of type (2a) as described in
	Theorem~\ref{thm:area_upper_bound} with at least three boundary
    lattice points. Taking \( Q := \conv(P \cap \Z^2)
	\), the polygon \( P' := \conv(Q \cup \{(0,\nicefrac{1}{k})\}) \)
	is then an area minimizer with the same number of interior and
	boundary lattice points as \( P \). Moreover, we can find a polygon
	that attains any area between \( \Area_k(P) \) and \( \Area_k(P') \)
	by means of the following three operations:
	\begin{enumerate}
		\item Shorten or lengthen the line segment \( P \cap \R \times \{-1\} \) by \(
		      \nicefrac{l}{k} \) on either side for \( 1 \leq l \leq k
              - 1 \). This lowers or increases \( \Area_k(P) \) by \( l
              \).
		\item Add or remove the vertex \( (\nicefrac{l}{k},\nicefrac{1}{k}) \) for \( 1 \leq
		      l \leq k-1 \). This increases or lowers \( \Area_k(P) \) by \( l \).
		\item Move in the vertices on \( P \cap \R \times \{-1+\nicefrac{1}{k}\} \)
		      by \( \nicefrac{1}{k} \). This lowers \( \Area_k(P) \) by \( k \).
	\end{enumerate}
	See figure~\ref{fig:area_attainers_in_between_min_max} for an
	example of how these operations can be used to attain any area
	between the lower and upper bounds.
\end{rem}

	\section{Half-integral polygons}
    \label{sec:half_integral_polygons}

    In this section, we prove the sharp upper bound for the area of
    half-integral polygons given in
    Theorem~\ref{thm:half_integral_area_upper_bound}. Note that it
    differs from the bound given in Theorem~\ref{thm:area_upper_bound}
    exactly in the cases \( i = 1 \) and \( b = 0 \), where it exceeds
    the general bound by \( \frac{1}{8} \) (\( = \) one unit of \( 2
    \)-normalized area). For \( i = 1 \), the half-integral polygons
    that violate the bound from Theorem~\ref{thm:area_upper_bound} are
    all subpolygons of the threefold standard lattice triangle, see
    figure~\ref{fig:half_integral_area_maximizers_two_interior_integral_lines}.
    For \( b = 0 \), the polygons violating
    Theorem~\ref{thm:area_upper_bound} are all realized in \( \R
    \times [-1,1] \). This class of half-integral area maximizers also
    occurs for \( b > 0 \), where their area agrees with the
    generic bound. The following lemma completely describes them.

 	\begin{lemma}[See also
        figure~\ref{fig:half_integral_area_maximizers_m1p1}]
        \label{lem:half_integral_area_bound_m1p1}
        Let \( P \subseteq \R \times [-1,1] \) be a rational polygon
        of denominator two with \( i \) interior and \( b \)
        boundary lattice points which cannot be realized in \( \R
        \times [-1,\frac{1}{2}] \). Then we have
        \[
            b \leq 2i+6, \qquad \Area_2(P) \leq 12i+2b+8.
        \]
        Moreover, for \( -1 \leq y \leq 1 \), write \( \ell_y \) for
        the length of the line segment \( P \cap (\R \times \{y\}) \)
        and \( b_y \) for the number of boundary lattice points on them. Then
        \( \Area_2(P) = 12i+2b+8 \) if and only if the following
        hold:
        \begin{itemize}
            \item \( (\ell_1,\ell_{-1}) = (b_1,b_{-1}) \) and \( \ell_0
                = i + \frac{2}{3} + \frac{1}{6}b_0 \),
            \item \( P \cap (\R \times [-\frac{1}{2},\frac{1}{2}]) \) is
                a trapezoid, i.e. \( P \) has no vertex on \( \R
                \times \{0\} \).
        \end{itemize}
	\end{lemma}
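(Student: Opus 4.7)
The plan is to exploit the horizontal slab structure of \( P \). The hypothesis that \( P \) lies in \( \R \times [-1,1] \) but cannot be realized in \( \R \times [-1, \frac{1}{2}] \) forces the \( y \)-lattice width of \( P \) to equal \( 2 \), so after translating we may assume \( P \) touches both \( y = 1 \) and \( y = -1 \). Since \( P \) has denominator two, every vertex has \( y \)-coordinate in \( \{-1, -\frac{1}{2}, 0, \frac{1}{2}, 1\} \) and every lattice point has \( y \in \{-1, 0, 1\} \). Cutting \( P \) along the three internal horizontal lines thus produces four trapezoids, whose \( 2 \)-normalized areas sum to
\[
	\Area_2(P)\ =\ 2\ell_{-1} + 4\ell_{-1/2} + 4\ell_0 + 4\ell_{1/2} + 2\ell_1,
\]
and we have \( b = b_{-1} + b_0 + b_1 \) with \( b_0 \in \{0, 1, 2\} \).

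The body of the argument rests on three sharp inequalities. First, at \( y = \pm 1 \) the endpoints of \( P_y \) are vertices of \( P \) in \( \frac{1}{2}\Z \), and a direct enumeration gives \( \ell_{\pm 1} \leq b_{\pm 1} \) with equality iff both endpoints lie in \( \frac{1}{2} + \Z \). Second, since \( \ell_y \) is concave in \( y \) (as the difference of the concave right boundary and the convex left boundary of \( P \)), applying concavity at \( y = 0 \) with neighbours \( \pm \frac{1}{2} \) yields \( \ell_{-1/2} + \ell_{1/2} \leq 2\ell_0 \), with equality iff \( P \) has no vertex on \( \R \times \{0\} \). Third, and this is the combinatorial heart of the argument, I would show
\[
	\ell_0\ \leq\ i\ +\ \tfrac{2}{3}\ +\ \tfrac{1}{6} b_0,
\]
by enumerating how the endpoints \( a_0, c_0 \) of \( P_0 \) can arise. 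Each is either a vertex of \( P \) in \( \frac{1}{2}\Z \) or the intersection of a slanted edge of \( P \) (connecting vertices in \( \frac{1}{2}\Z^2 \) at heights \( y_1 < 0 < y_2 \) from \( \{-1, -\frac{1}{2}, \frac{1}{2}, 1\} \)) with the line \( y = 0 \). Going through the four transversal combinations, all possible \( x \)-intercepts lie in \( \frac{1}{2}\Z \cup \frac{1}{4}\Z \cup \frac{1}{6}\Z \), so the maximal fractional extension of an endpoint beyond the nearest interior lattice point is \( \frac{5}{6} \), achieved only by edges of type \( (-1, \frac{1}{2}) \) or \( (-\frac{1}{2}, 1) \). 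Working out the three sub-cases \( b_0 \in \{0, 1, 2\} \) then yields the displayed bound; this edge-type enumeration is the main obstacle.

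Combining the three bounds gives \( \Area_2(P) \leq 2(b_{-1} + b_1) + 12\ell_0 \leq 12i + 2b + 8 \), and the equality characterization in the lemma falls out directly, since tightness in each of the three inequalities gives exactly \( (\ell_1, \ell_{-1}) = (b_1, b_{-1}) \), the ``no vertex on \( \R \times \{0\} \)'' condition, and \( \ell_0 = i + \frac{2}{3} + \frac{1}{6} b_0 \). The remaining inequality \( b \leq 2i + 6 \) follows by passing to the integer hull \( Q := \conv(P \cap \Z^2) \): using \( i + b = i(Q) + b(Q) \) and \( b \leq b(Q) \), Scott's inequality applied to \( Q \) handles \( i(Q) \geq 2 \) immediately, the hollow case \( i(Q) = 0 \) is dealt with via Proposition~\ref{prp:hollow_lattice_polygons} combined with the lattice-width-\( 2 \) constraint, and the borderline case \( i(Q) = 1 \) (where \( Q \) would be the threefold standard lattice triangle) requires noting that any denominator-two proper extension of \( Q \) must shift at least one of \( Q \)'s six edge-interior lattice points into the interior of \( P \), yielding \( b(P) < b(Q) = 9 \).
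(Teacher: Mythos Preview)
Your argument for the area bound is essentially identical to the paper's: the same decomposition into horizontal strips, the same three sharp inequalities \( \ell_{\pm 1}\le b_{\pm 1} \), \( \ell_{-1/2}+\ell_{1/2}\le 2\ell_0 \), and \( \ell_0\le i+\tfrac{2}{3}+\tfrac{1}{6}b_0 \) (the paper phrases the last one via the list of admissible fractional parts \( \{0,\tfrac16,\tfrac14,\tfrac13,\tfrac12,\tfrac23,\tfrac34,\tfrac56\} \), which is exactly your edge-type enumeration), and the same combination yielding \( \Area_2(P)\le 12\ell_0+2(b_{-1}+b_1)\le 12i+2b+8 \) with the stated equality conditions.

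Where you diverge is the bound \( b\le 2i+6 \). You route this through the integer hull and Scott's inequality, which forces you into case distinctions on \( i(Q) \), an appeal to Proposition~\ref{prp:hollow_lattice_polygons}, and a special argument for the threefold triangle; the hollow case in particular is only sketched and would need the lattice-width constraint spelled out carefully (and you also silently need \( \dim Q=2 \), else Scott does not apply). The paper's route is much shorter and stays entirely within the slab picture you have already set up: from \( b_{\pm 1}\le \ell_{\pm 1}+1 \), \( b_0\le 2 \), convexity \( \ell_{-1}+\ell_1\le 2\ell_0 \), and \( \ell_0\le i+1 \) one gets directly
\[
b=b_{-1}+b_0+b_1\ \le\ \ell_{-1}+\ell_1+4\ \le\ 2\ell_0+4\ \le\ 2i+6.
\]
Since you already have all of these ingredients in hand for the area estimate, this two-line argument is preferable to invoking Scott.
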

	\begin{proof}

        We clearly have \( b_1 \leq \ell_1 + 1 \) and \( b_{-1} \leq
        \ell_{-1} + 1 \) as well as \( b_0 \leq 2 \). Moreover, \(
        \ell_0 \leq i + 1 \) holds and by convexity, we have \(
        \ell_{-1} + \ell_1 \leq 2\ell_0 \). We obtain
        \[
            b\ =\ b_{-1} + b_0 + b_1\ \leq\ \ell_{-1} + \ell_1 + 4
            \ \leq\ 2\ell_0 + 4\ \leq\ 2i+6.
        \]
        To estimate the area,
        consider the trapezoids \( P_{-1} := P \cap (\R \times
        [-1,-\frac{1}{2}]) \) and \(
        P_1 := P \cap (\R \times [\frac{1}{2},1]) \). Note that these
        are non-trivial, since we assume \( P \) cannot be realized in
        \( \R \times [-1,\frac{1}{2}] \). Their area is given by
        \[
            \Area_2(P_{-1})\ =\ 2(\ell_{-1} + \ell_{-\frac{1}{2}}),
            \qquad \Area_2(P_1)\ =\ 2(\ell_{\frac{1}{2}} + \ell_1).
        \]
        Moreover, the area of \( P_0 := P \cap (\R \times
        [-\frac{1}{2},\frac{1}{2}]) \) is
        \[
        \Area_2(P_0)\ =\ 2(\ell_{-\frac{1}{2}} + \ell_0) + 2(\ell_0 +
        \ell_{\frac{1}{2}})\ =\ 4\ell_0 + 2(\ell_{-\frac{1}{2}} +
        \ell_{\frac{1}{2}}).
        \]
        Since \( P \) has denominator two, we have \(
        \ell_{-1} \leq b_{-1} \) and \( \ell_1 \leq b_1 \). Moreover,
        convexity forces \( \ell_{-\frac{1}{2}} + \ell_{\frac{1}{2}}
        \leq 2\ell_0 \). In total, we obtain
        \[
            \Area_2(P) = \Area_2(P_{-1}) + \Area_2(P_0) + \Area_2(P_1)
            \leq 12\ell_0 + 2(b_{-1} + b_1).
        \]
        Consider now the line segment \( [s,s+\ell_0] \times \{0\} :=
        P \cap (\R \times \{0\}) \). Since \( P \subseteq \R \times
        [-1,1] \) and the denominator of \( P \) is two, the possible
        values for the rational part of \( s \) are \( \{0,
        \frac{1}{6}, \frac{1}{4}, \frac{1}{3}, \frac{1}{2},
        \frac{2}{3}, \frac{3}{4}, \frac{5}{6} \} \). This implies the
        bound \( \ell_0 \leq i + \frac{2}{3} + \frac{1}{6}b_0 \). Hence we
        obtain
        \[
            \Area_2(P) \leq 12i+8+2(b_0+b_{-1}+b_1) = 12i+2b+8.
        \]
        Moreover, equality holds if and only if the conditions from
        the assertion hold. Note that \( P_0 \) being a trapezoid is
        equivalent to \( \ell_{-\frac{1}{2}} + \ell_{\frac{1}{2}} =
        2\ell_0 \). See also
        figure~\ref{fig:half_integral_area_maximizers_m1p1} for an
        illustration of some area maximizers of the described form.
	\end{proof}

    In our proof of Theorem~\ref{thm:half_integral_area_upper_bound},
    we use the concept of lattice width, which we now quickly recall.
    For a polygon \( P \subseteq \R^2 \) and a non-zero vector \( w
    \in \R^2 \), we define the \emph{width} of \( P \) in direction \(
    w \) as
    \[
        \width_w(P)\ :=\ \max_{v \in P} \langle w,v \rangle - \min_{v
        \in P} \langle w,v \rangle.
    \]
    The \emph{lattice width} \( \lw(P) \) of \( P \) is defined to be the minimum
    over \( \width_w(P) \) where \( w \in \Z^2 \setminus \{0\} \).
    By a \emph{lattice width direction}, we mean a \( w \in \Z^2
    \setminus \{0\} \) such that \( \lw(P) = \width_w(P) \).

\begin{proof}[Proof of
    Theorem~\ref{thm:half_integral_area_upper_bound}]

    After an affine unimodular transformation, we can
    assume that \( (0,1) \) is a lattice width direction of \( P \).
    Let \( n \) be the number of interior integral horizontal lines,
    i.e.
	\[
		n\ :=\ |\{j \in \Z \,\mid\,P^{\circ} \cap (\R \times \{j\})
		\neq \emptyset \}|.
	\]
    We first treat \( n = 1 \), where we distinguish two subcases:
    Either \( P \) can be realized in \( \R \times [-1,\frac{1}{2}] \)
    or it can be realized in \( \R \times [-1,1] \) but not in \( \R
    \times [-1,\frac{1}{2}] \). In the former case, the assertion
    follows from the proof of Theorem~\ref{thm:area_upper_bound}, see
    also Remark~\ref{rem:area_maximizers_in_small_strip}. In
    particular, we get sharpness for our bound if \( b \geq 1 \). In
    the latter case, we refer to
    lemma~\ref{lem:half_integral_area_bound_m1p1}, where we also
    obtain sharpness for \( b = 0 \).

    Now we consider \( n \geq 2 \). We may assume \( P \subseteq \R \times
	[0,n+1] \). For \( 0 \leq y \leq n+1 \), let \( \ell_y \) be the
	length of the line segment \( P \cap (\R \times \{y\}) \). Consider
	the numbers \( i_j := |P^{\circ}\cap(\Z \times \{j\})| \) for \( j =
	1, \dots, n \) as well as \( b_j := |\partial P \cap (\Z \times \{j\})| \) for \( j = 0, \dots, n+1 \). Note
	that we have
	\[
		\ell_0 \leq b_0, \qquad \ell_{n+1} \leq b_{n+1}, \qquad \ell_j
		\leq i_j+1\ \text{for}\ j = 1, \dots, n.
	\]
	Consider the subpolygons \( P_j := P \cap (\R \times [j-\frac{1}{2},
		j+\frac{1}{2}]) \) for \( j = 0, \dots, n+1 \). For \( j = 1, \dots,
	n \), we can estimate their area by
	\[
		\Area_2(P_j) = 4\ell_j + 2(\ell_{j-\frac{1}{2}} + \ell_{j+\frac{1}{2}})
		\leq 8\ell_j \leq 8(i_j+1).
	\]
    In particular, we obtain \( \sum_{j=1}^n \Area_2(P_j) \leq 8i+8n
    \). Moreover, we have \( \Area_2(P_0) = 2(\ell_0+\ell_{\frac{1}{2}}) \)
	and \( \Area_2(P_{n+1}) = 2(\ell_{n+\frac{1}{2}} + \ell_{n+1}) \). By
	convexity, \( \ell_{\frac{1}{2}} + \ell_{n+\frac{1}{2}} \leq
	\ell_k+\ell_{n-k+1} \) holds for all \( k = 1, \dots, \lfloor \frac{n}{2}
	\rfloor\). It follows
	\[
		\Area_2(P_0) + \Area_2(P_{n+1})\ \leq \ 
		2(b_0+b_{n+1}+\ell_k+\ell_{n+k+1})\ \leq\ 
		2(b+i_k+i_{n+k+1}+2).
	\]

	By choosing \( k = 1, \dots, \lfloor \frac{n}{2}\rfloor \) such that
	\( i_k+i_{n+k+1} \) is minimal, we ensure \( i_k+i_{n+k+1} \leq
	\frac{i}{\lfloor n/2 \rfloor} \). In total, we obtain
	\[
		\Area_2(P)\ =\ \sum_{j=0}^{n+1} \Area_2(P_j)\ \leq\ 8i+8n +
		2b+\frac{2i}{\lfloor n/2 \rfloor}+4.
	\]
    For \( n = 2 \), this gives \( \Area_2(P) \leq 10i+2b+20 \), which
    strictly less than \( 12i+2b+6 \) for \( i \geq 8 \). By our
    classification~\cite{BS24}, we could verify that the claim holds
    for the finitely many polygons with \( i \leq 7 \). In particular,
    we found that the only maximizers with \( n = 2 \) are the
    finitely many polygons with \( i \in \{1,2\} \) drawn in
    figure~\ref{fig:half_integral_area_maximizers_two_interior_integral_lines}.
    For \( i = 1 \) and \( b \leq 6 \), their area even exceeds the
    generic bound, which is the reason we list \( i = 1 \) seperately
    in the assertion.

	For \( n \geq 3 \), we use the estimate \( \lfloor n/2 \rfloor \geq \frac{n-1}{2}
	\). It is then enough to show
	\[
		8i+8n+\frac{4i}{n-1}+2b+4\ <\ 12i+2b+6,
	\]
	which is equivalent to
	\[
		f(n)\ :=\ \frac{4n^2-5n+1}{2n-4}\ <\ i.
	\]
    By \cite[Proposition 2.3]{Boh23}, we get estimates \( i_j \geq j-1
    \) and \( i_{n+1-j} \geq j-1 \) for \( j = 1, \dots, \lceil
    \frac{n}{2} \rceil \). Taking the sum, we obtain \( i \geq
    \frac{n^2-2n}{4} \). For \( n \geq 11 \), this implies the claim.
    On the other hand, for \( n \leq 7 \), we have \( f(n) < 17 \).
    Again by our classification, we know the claim holds for \( i \leq
    16 \), hence this case is done. For the remaining cases \( n \in
    \{8,9,10\} \), we used our classification \cite{BS24} of maximal half-integral
    polygons with up to \( 40 \) interior lattice points to compute
    their maximally attained lattice width \( \lw_{\max} \). We
    obtained the following table:
    \begin{center}
    \begin{tabular}{c|cccccccccc}
        \( i \) & 1 & 2 & 3 & 4 & 5 & 6 & 7 & 8 & 9 & 10 \\
        \( \lw_{\max} \) & 3 & 3 & 4 & 4 & 4 & 5 & 5 & 5 &
        \nicefrac{11}{2} & 6 \\\hline
        \( i \) & 11 & 12 & 13 & 14 &15 &16 &17 &18 &19 &20 \\
        \( \lw_{\max} \) & \nicefrac{11}{2} & 6 & 6 & 6 & 7 & 7 & 7 &
        \nicefrac{15}{2} & 8 & \nicefrac{15}{2} \\\hline
        \( i \) & 21 & 22 &23 &24 &25 &26 &27 &28 &29 &30 \\
        \( \lw_{\max} \) & 8 & 8 & 8 & 8 & \nicefrac{17}{2} & 9 &
        \nicefrac{17}{2} & 9 & 9 & \nicefrac{19}{2} \\\hline
        \( i \) & 31 & 32 &33 &34 &35 &36 &37 &38 &39 &40\\
        \( \lw_{\max} \) & 10 & \nicefrac{19}{2} & \nicefrac{19}{2} &
        \nicefrac{19}{2} & 10 & 10 & 10 & 10 & 10 & 11
    \end{tabular}
    \end{center}
    We now treat \( n = 8 \). We have \( f(8) < 19 \), hence it
    suffices to show \( i \geq 19 \). Assume \( i \leq 18 \), then the
    table above implies \( n \leq \lw(P) \leq \frac{15}{2} < 8 \), a
    contradiction. The cases \( n \in \{9,10\} \) are settled
    analogously.
\end{proof}

\begin{rem}
    \label{rem:half_integral_area_maximizers}
    For half-integral polygons with at least three interior integral
    horizontal lines, the proof of
    Theorem~\ref{thm:half_integral_area_upper_bound} shows that the
    area bound is strict. This means that there are no half-integral
    area maximizers with \( n \geq 3 \). Hence the half-integral area
    maximizers are precisely the following:
    \begin{itemize}
        \item All maximizers in \( \R \times [-1,\frac{1}{2}] \),
            which are described in
            Theorem~\ref{thm:area_upper_bound},
        \item maximizers in \( \R \times [-1,1] \) as described by
            Lemma~\ref{lem:half_integral_area_bound_m1p1} and
        \item the finitely many area maximizers with \( n = 2 \) for
            \( i \in \{1,2\} \), which are drawn in
            figure~\ref{fig:half_integral_area_maximizers_two_interior_integral_lines}.
    \end{itemize}
    
\end{rem}

In the rest of this section, we show how our area bounds can be
applied to Ehrhart theory of half-integral polygons.
Recall that a rational polygon \( P
\subseteq \R^2 \) admits an \emph{Ehrhart quasipolynomial}, which
counts lattice points in its integral dilations, i.e.
\[
    \ehr_P(t)\ :=\ |tP \cap \Z^2|\ =\ \area(P)t^2+c_1(t)t+c_2(t),
    \qquad \text{for } t \in \Z_{\geq 1},
\]
where \( c_1,c_2 \colon \Z \to \Q \) are uniquely determined periodic
functions whose period divides the denominator of \( P \). For
polygons of denominator two, the four coefficients \(
c_1(1), c_1(2), c_2(1) \) and \( c_2(2) \) can be described in terms of the
numbers \( i \) and \( b \) of interior and boundary lattice points,
the area and the number \( b(2P) = |\partial P \cap \frac{1}{2}\Z^2| \) of
half-integral boundary points:
\[
    c_1(1) = \frac{b}{2}, \qquad c_2(1) = i+\frac{b}{2}-\area(P),
    \qquad c_1(2) = \frac{b(2P)}{4}, \qquad c_2(2) = 1.
\]
We refer to~\cite{BS15} for more information on Ehrhart theory in
general. The half-integral case was already studied in \cite{Her10}
with focus on the coefficients \( c_1(1) \) and \( c_2(1) \) for not
necessarily convex polygons. Moreover, the cases with period collapse,
i.e. \( c_1(1)=c_1(2) \) and \( c_2(1)=c_2(2) \), were studied in
\cite{Boh24}. Restrictions on the coefficients of Ehrhart
quasipolynomials of half-integral polygons, in particular for the case
\( i = 0 \), were also given in \cite{HHK24}.

Using our results, we contribute to the description of Ehrhart
quasipolynomials for convex half-integral polygons with at least one
interior point. Note that by the formulas for the coefficients above,
the quadruple \( (i,b,\area(P),b(2P)) \) completely determines the
Ehrhart quasipolynomial. To understand which Ehrhart quasipolynomials
are possible, we look for sharp bounds between these four parameters:
By Theorem~\ref{thm:scott_generalization}, we have the sharp bound \(
0 \leq b \leq 3i+6 \) and clearly every intermediate value is
attained. Theorems~\ref{thm:area_lower_bound}
and~\ref{thm:half_integral_area_upper_bound} give sharp lower and
upper bounds for the area in terms of \( i \) and \( b \), while
Remark~\ref{rem:area_attainers_in_between_min_max} shows that all
intermediate values are attained if \( b \geq 3 \). It remains to
bound \( b(2P) \) in terms of the first three parameters. We succeeded
in proving the following sharp lower bound:

\begin{prop}
        Let \( P \) be a half integral polygon with \( i \geq 2 \)
        interior lattice points and \( b \geq 3 \) boundary lattice
        points. Set \( A := \Area_2(P) \).
        Then the number of half-integral
        boundary points of \( P \) is bounded sharply as follows:
		\begin{displaymath}
			b(2P)\ \geq\ 2b + r +\ 
			\begin{cases}
                2, & \text{ if } A = 12i+2b+8 \text{ and } b\neq 2i+4  \\
				0, & \text{ otherwise }
			\end{cases},
		\end{displaymath}
        where \( r \) is \( 1 \) if \( A \) is odd and \( 0 \) if
        it is even.
	\end{prop}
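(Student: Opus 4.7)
The plan is to derive the bound in three pieces: a baseline $b(2P) \geq 2b$ from an edge-by-edge count, a parity refinement $+r$ from Pick's formula applied to the lattice polygon $2P$, and the $+2$ bonus at the area maximum via a case analysis on the known families of maximizers.

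For the baseline, I would proceed as follows. For each edge $E$ of $P$ with half-integer endpoints $p, q$, let $h_E$ and $l_E$ count the half-integer and integer points on $E$ (including endpoints), and let $\epsilon_E \in \{0, 1, 2\}$ be the number of non-lattice endpoints of $E$. Writing $q - p = \tfrac{1}{2}(a_1, a_2)$ with $g_E = \gcd(a_1, a_2)$, the $g_E + 1$ half-integer points on $E$ are $p + \tfrac{j}{g_E}(q - p)$, and adjacent ones differ by $v = \tfrac{1}{2}(a_1/g_E, a_2/g_E)$; since $\gcd(a_1/g_E, a_2/g_E) = 1$, $v \notin \Z^2$. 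Thus adjacent half-integer points on $E$ cannot both be lattice, and a case analysis on the $(\Z/2)^2$-class of $p$ and the parity of $g_E$ yields $h_E \geq 2l_E + \epsilon_E - 1$, with equality whenever $E$ contains at least one lattice point and slack $g_E \geq 1$ otherwise. Summing over the $n$ edges using $\sum_E h_E = b(2P) + n$, $\sum_E l_E = b + V^{\Z}$ and $\sum_E \epsilon_E = 2(n - V^{\Z})$, I would obtain
\[
    b(2P) - 2b\ =\ \sum_{\substack{E \text{ contains no} \\ \text{lattice point}}} g_E\ \geq\ 0.
\]
The parity refinement follows from Pick's formula $A = 2\,i(2P) + b(2P) - 2$ for the lattice polygon $2P$, which gives $b(2P) \equiv A \pmod{2}$ and hence $b(2P) \geq 2b + r$.

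Next, for the $+2$ bonus I would use the classification of area maximizers in Remark~\ref{rem:half_integral_area_maximizers}. If $A = 12i + 2b + 8$, then $A$ is even ($r = 0$) and the slack $\sum g_E$ is even; if it is nonzero, it is $\geq 2$. So it suffices to show that if $P$ has no lattice-free edge then $b = 2i + 4$. The maximizers split into three families: (I) polygons from Theorem~\ref{thm:area_upper_bound} at $k = 2$ contained in $\R \times [-1, \tfrac{1}{2}]$; (II) polygons from Lemma~\ref{lem:half_integral_area_bound_m1p1} contained in $\R \times [-1, 1]$ but not in $\R \times [-1, \tfrac{1}{2}]$; (III) finitely many polygons with $n = 2, i \in \{1, 2\}$. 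For (I), an inspection of the vertex coordinates of (0a), (1a) and (2a) reveals a pair of short slanted edges with $g_E = 1$ whose endpoints both lie in non-lattice $(\Z/2)^2$-classes, forcing slack $\geq 2$. For (II), the half-integrality of $\ell_{\pm 1/2}$ forces $\ell_0 = i + 1$, hence $b_0 = 2$; a parity check on the slanted edges from $y = \pm 1$ to $y = \pm \tfrac{1}{2}$ shows that these are always lattice-free, so ``no slack'' is possible only when one of $b_1 = 0$ or $b_{-1} = 0$, in which case $P$ degenerates to a triangle with $b = b_{\pm 1} + 2 = 2i + 4$. Family (III) is finite and handled by direct enumeration.

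For sharpness, the triangle $\conv\bigl((\tfrac{1}{2}, 1),\, (2i + \tfrac{5}{2}, 1),\, (\tfrac{3}{2}, -1)\bigr)$ is an area maximizer with $b = 2i + 4$ in which every edge carries a lattice point (the two slanted edges pass through $(1, 0)$ and $(i+2, 0)$, respectively), so $b(2P) = 2b$, proving sharpness of the baseline in this case. For $b \neq 2i + 4$ (and the area-max condition), any polygon of type (1a) or (2a) from Theorem~\ref{thm:area_upper_bound} at $k = 2$ realizes $b(2P) = 2b + 2$ by direct computation tracking the two short slanted edges. The main obstacle will be the family-by-family verification in step three, especially the Lemma family, where one must show that allowing both $b_1 \geq 1$ and $b_{-1} \geq 1$ always forces a lattice-free slanted edge regardless of how the vertices are placed within the trapezoidal constraints.
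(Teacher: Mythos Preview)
Your overall strategy matches the paper's: establish $b(2P)\ge 2b$, refine by parity via Pick's formula on $2P$, and treat the area-maximal case through the classification in Remark~\ref{rem:half_integral_area_maximizers}. Your edge-by-edge derivation of the exact identity $b(2P)-2b=\sum_{E\text{ lattice-free}}g_E$ is more explicit than the paper's one-line justification and is correct.

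There are two genuine gaps. First, you address sharpness only at the area maximum. The proposition asserts sharpness for \emph{every} admissible triple $(i,b,A)$, so for each $A$ with $6i+4b-6\le A\le 12i+2b+7$ you must exhibit a polygon with $b(2P)=2b+r$. The paper supplies explicit families: for $A\ne 6i+4b-5$ the convex hull of $(0,\tfrac12),(0,-1),(b-3+\tfrac{r}{2},-1),\bigl(2i+2-\tfrac12\lfloor\tilde A/2\rfloor,-\tfrac12\bigr),(i+1,0)$ with $\tilde A=12i+2b-7-A$, and a separate pentagon for $A=6i+4b-5$. Without such examples the word ``sharply'' is unjustified.

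Second, your treatment of family~(II) rests on the claim that $\ell_{\pm 1/2}$ are half-integers, forcing $b_0=2$. This fails: an edge from a vertex $(a,1)$ with $a\in\Z+\tfrac12$ to a vertex at $y=-1$ crosses $y=\tfrac12$ at an $x$-coordinate that is generically only a quarter-integer, and Figure~\ref{fig:half_integral_area_maximizers_m1p1} exhibits maximizers with $b_0=1$. The paper's argument is cleaner: if $P$ has \emph{no} vertex at $y=\pm\tfrac12$, the side edges run straight from $y=-1$ to $y=1$, giving $\ell_0=(b_1+b_{-1})/2$; equating this with the maximizer condition $\ell_0=i+\tfrac23+\tfrac16 b_0$ and using integrality of $b_1+b_{-1}$ forces $b_0=2$ and then $b=2i+4$. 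Contrapositively, $b\ne 2i+4$ yields a vertex at $y=\pm\tfrac12$; this vertex is non-integral, its neighbour toward $y=\pm 1$ is non-integral as well (since $\ell_{\pm1}=b_{\pm1}$ forces the endpoints of $P_{\pm1}$ into $\Z+\tfrac12$), and the connecting edge meets no integer height, hence is lattice-free.
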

	\begin{proof}

        Since there is always a half-integral point between two
        adjacent boundary lattice points, we get \( b(2P) \geq 2b \).
        By Pick's formula, we have
        \[
			A = \Area_1(2P)=2i(2P)+b(2P)+2.
        \]
        This implies that \( b(2P) - A \) is an even number, hence \(
        b(2P) \geq 2b + r \). We now argue that this is a sharp
        inequality for \( A \neq 12i+2b+8 \) by giving examples.
        Note that by Theorem~\ref{thm:area_lower_bound}, we have \( A
        \geq 6i+4b-6 \). Consider \( \tilde A := 12i+2b-7-A \). Then
        for \( A \neq 6i+4b-5 \), the convex hull of
        \[
			(0,1/2), (0,-1), (b-3+r/2,-1),
            \left(2i+2-\frac{1}{2}\floor{\tilde A/2},-1/2\right), (i+1,0)
        \]
        satisfies \( b(2P) = 2b+r \). For \( A = 6i+4b-5 \), the
        convex hull of
        \[
			(0,1/2), (1/2,1/2), (i+1,0), (b-3,-1), (0,-1)
        \]
        works. Moreover, for \( A = 12i+2b+8 \) and \( b = 2i+4 \), we
        can realize \( b(2P) = 2b+r \) by the following triangle:
        \[
			\conv((-1/2,-1), (2i+3/2,-1), (1/2,1)).
        \]
        It remains to show that for \( A = 12i+2b+8 \) and \( b \neq
        2i+4 \), we have the sharp inequality \( b(2P) \geq 2b + 2 \)
        (note that \( r = 0 \) in this case). By Remark
        \ref{rem:half_integral_area_maximizers}, there are three kinds
        of half-integral area maximizers: Those in \( \R \times
        [-1,\frac{1}{2}] \) are described by
        Theorem~\ref{thm:area_upper_bound} and we can check that \(
        b(2P) = 2b+2 \) holds for all of them. The same is true for
        the finitely many area maximizers with two interior integral
        lines and \( i = 2 \) drawn in
        figure~\ref{fig:half_integral_area_maximizers_two_interior_integral_lines}.
        The remaining ones are described by
        lemma~\ref{lem:half_integral_area_bound_m1p1}. Here, \( b \neq
        2i+4 \) implies that we have at least one vertex with second
        coordinate \( \pm \frac{1}{2} \). Hence there is at least one
        pair of adjacent non-intgral vertices with no lattice point
        betwen them, which implies \( b(2P) \geq 2b+2 \).
	\end{proof}

    It remains to bound \( b(2P) \) from above in terms of \( i, b \)
    and \( \area(P) \) and to understand which intermediate values are
    attained. Together with our other bounds, this would allow the
    complete description of the four-parameter family \(
    (i(P),b(P),\area(P),b(2P)) \) and hence all possible Ehrhart
    quasipolynomials of half-integral polygons having \( i \geq 1 \).
    In particular, this should provide a path to prove that the number
    of distinct Ehrhart quasipolynomial of half-integral polygons is
    given by the following polynomial, which we have already observed
    empirically from our classification:

    \begin{conj}[See also~{\cite[Conjecture 6.4]{BS24}}]
	There are exactly 
    \[
	\frac{9}{2}i^3+36i^2+\frac{175}{2}i+53
    \]
    Ehrhart quasi-polynomials of half-integral polygons with $i\in
    \Z_{\geq 2}$ interior lattice points and at least \( 3 \) boundary lattice
    points.
	\end{conj}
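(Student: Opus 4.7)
The plan is to count distinct Ehrhart quasipolynomials by noting that, for a half-integral polygon $P$, the quasipolynomial is equivalent to the quadruple $(i(P), b(P), \area(P), b(2P))$: the data $\area(P), c_1(1), c_2(1), c_1(2), c_2(2)$ together recover these four parameters uniquely via the formulas displayed before the Proposition. Hence the conjectured count equals the number of realizable quadruples with $i(P) = i$ and $b(P) \geq 3$.

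For fixed $i \geq 2$, one would sum over $b \in \{3,\dots,3i+6\}$ (the Scott-type bound of Theorem~\ref{thm:scott_generalization} specialized to $k = 2$). For each such $b$, Theorem~\ref{thm:area_lower_bound} and Theorem~\ref{thm:half_integral_area_upper_bound} give sharp integer bounds $A_{\min}(i,b) \leq \Area_2(P) \leq A_{\max}(i,b)$; the remaining task in this dimension is to verify, using constructions similar to those in Remark~\ref{rem:area_attainers_in_between_min_max} together with the special families from Remark~\ref{rem:half_integral_area_maximizers}, that every integer value in this range is attained. It then remains, for each triple $(i,b,A)$, to count the attainable values of $B := b(2P)$. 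Pick's formula applied to $2P$ forces $B \equiv A \pmod 2$, and the Proposition above already supplies a sharp lower bound on $B$.

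The main obstacle, explicitly flagged in the paragraph preceding the conjecture, is to establish a sharp upper bound on $B$ in terms of $(i,b,A)$ together with an attainability statement for all parity-compatible values between the two bounds. A natural approach is to mirror the strategy used for the area bounds: reduce to polygons contained in a thin horizontal strip via lattice-width arguments analogous to Lemmas~\ref{lem:area_bound_not_m1p1} and~\ref{lem:area_bound_not_small_schlauch}, then perform an explicit case analysis in the resulting strip to see how many half-integral points an edge of given lattice length can carry while keeping $(i,b,A)$ fixed. For intermediate values one would use local perturbations in the spirit of Remark~\ref{rem:area_attainers_in_between_min_max}, such as splitting an edge at a half-integral point or sliding a half-integral vertex along an edge, each of which changes $B$ by a controlled amount while preserving $i$, $b$ and $\area(P)$. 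I expect this upper bound to be the genuinely hard step, because by Remark~\ref{rem:half_integral_area_maximizers} the area-maximizers split into several qualitatively distinct families (the thin-strip family, the $\R\times[-1,1]$ family of Lemma~\ref{lem:half_integral_area_bound_m1p1}, and the sporadic $n=2$ family), and each family is likely to impose its own upper bound on $B$.

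Once such a count $\#\{B\}(i,b,A)$ is in hand, the total is
\[
    \sum_{b=3}^{3i+6}\ \sum_{A = A_{\min}(i,b)}^{A_{\max}(i,b)} \#\{B\}(i,b,A).
\]
Since $A_{\min}$, $A_{\max}$ and the anticipated bounds on $B$ are piecewise linear in $i$ and $b$ with only finitely many boundary cases, this double sum is piecewise polynomial in $i$, and a routine Faulhaber-style evaluation should collapse all the cases into the single cubic $\frac{9}{2}i^3 + 36i^2 + \frac{175}{2}i + 53$. A useful final sanity check is to evaluate the formula at small $i$ and compare with the classification data of~\cite{BS24} that originally motivated the conjecture.
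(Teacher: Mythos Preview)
The statement you are attempting to prove is labeled a \emph{Conjecture} in the paper, and the paper does not prove it. The paragraph immediately preceding it explicitly says that what remains is ``to bound $b(2P)$ from above in terms of $i$, $b$ and $\area(P)$ and to understand which intermediate values are attained,'' and that doing so ``should provide a path to prove'' the stated formula, which they have only ``observed empirically'' from their classification data. There is therefore no proof in the paper to compare against.

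Your proposal is not a proof either, and you are candid about this: you correctly identify the reduction to counting realizable quadruples $(i,b,\Area_2(P),b(2P))$, you invoke the existing sharp bounds on $b$ and on $\Area_2(P)$, and you isolate the sharp upper bound on $b(2P)$ (together with attainability of intermediate parity-compatible values) as the missing ingredient. This is exactly the gap the authors themselves flag. In other words, your outline coincides with the strategy the paper already sketches; neither you nor the paper supplies the hard step. One small caution: your appeal to Remark~\ref{rem:area_attainers_in_between_min_max} for intermediate areas is stated there for maximizers of type~(2a) from Theorem~\ref{thm:area_upper_bound}, whose hypotheses require $k\geq 4$; for $k=2$ the area maximizers split into several families (Remark~\ref{rem:half_integral_area_maximizers}), so even the ``all intermediate areas are attained'' step needs its own argument in the half-integral case before one can proceed to the $b(2P)$ layer.
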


    \printbibliography

@book {BS15,
    AUTHOR = {Beck, Matthias and Robins, Sinai},
     TITLE = {Computing the continuous discretely},
    SERIES = {Undergraduate Texts in Mathematics},
   EDITION = {Second},
      NOTE = {Integer-point enumeration in polyhedra,
              With illustrations by David Austin},
 PUBLISHER = {Springer, New York},
      YEAR = {2015},
     PAGES = {xx+285},
      ISBN = {978-1-4939-2968-9; 978-1-4939-2969-6},
   MRCLASS = {11P21 (05A15 05B15 11-02 11H06 52B05 52B20)},
  MRNUMBER = {3410115},
       DOI = {10.1007/978-1-4939-2969-6},
       URL = {https://doi.org/10.1007/978-1-4939-2969-6},
}

@misc{Boh23,
      title={Area bounds for planar convex bodies containing a fixed number of interior integral points}, 
      author={Martin Bohnert},
      year={2023},
      eprint={2305.11485},
      archivePrefix={arXiv},
      primaryClass={math.MG},
      url={https://arxiv.org/abs/2305.11485}, 
}

@misc{Boh24,
      title={Quasi-period collapse in half-integral polygons}, 
      author={Martin Bohnert},
      year={2024},
      eprint={2405.13404},
      archivePrefix={arXiv},
      primaryClass={math.CO},
      url={https://arxiv.org/abs/2405.13404}, 
}

@misc{HHK24,
      title={Classification of half-integral polygons by number of lattice points}, 
      author={Girtrude Hamm and Johannes Hofscheier and Alexander Kasprzyk},
      year={in preparation},
}

@mastersthesis{Her10,
    author =  {Andrew John Herrmann},
    title = {Classification of Ehrhart quasi-polynomials of half-integral polygons},
    school = {San Francisco State University},
    year = {2010},
    URL = {https://matthbeck.github.io/teach/masters/andrewh.pdf},
}

@phdthesis {Koe91,
    AUTHOR = {R. J. Koelman},
    TITLE  = {The number of moduli of families of curves on toric surfaces},
    YEAR   = {1991},
    SCHOOL = {University of Nijmegen},
}

@article {Pic99,
    AUTHOR = {G. Pick},
     TITLE = {Geometrisches zur Zahlenlehre},
   JOURNAL = {Lotos - Zeitschrift für Naturwissenschaften},
    VOLUME = {47},
      YEAR = {1899},
     PAGES = {311--319},
}

@article {Sco76,
    AUTHOR = {Scott, P. R.},
     TITLE = {On convex lattice polygons},
   JOURNAL = {Bull. Austral. Math. Soc.},
  FJOURNAL = {Bulletin of the Australian Mathematical Society},
    VOLUME = {15},
      YEAR = {1976},
    NUMBER = {3},
     PAGES = {395--399},
      ISSN = {0004-9727},
   MRCLASS = {52A10 (10E05)},
  MRNUMBER = {430960},
MRREVIEWER = {J.\ M.\ Wills},
       DOI = {10.1017/S0004972700022826},
       URL = {https://doi.org/10.1017/S0004972700022826},
}

@misc{BS24,
      title={Classifying rational polygons with small denominator and few interior lattice points}, 
      author={Martin Bohnert and Justus Springer},
      year={2024},
      eprint={2410.17244},
      archivePrefix={arXiv},
      primaryClass={math.CO},
     url={https://arxiv.org/abs/2410.17244}, 
}

    \appendix
    \listoffigures

\begin{figure}[H]
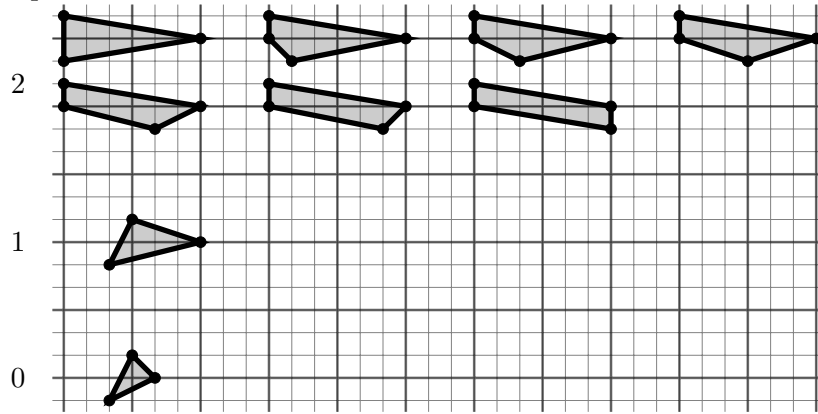


    \caption[Area minimizers with collinear lattice points from
    Theorem~\ref{thm:area_lower_bound}]{All area minimizers with collinear lattice points as described by
		Theorem~\ref{thm:area_lower_bound} for \( (k,i) = (3,1) \). On
        the left is the number \( b \) of boundary lattice points.}
    \label{fig:area_minimizers_collinear_integer_hull}

% [inline block 0: 6 envs, 72359 chars -> data_tex | \begin{tikzpicture}[x=0.3cm,y=0.3cm] ...]


\end{figure}

\end{document}